\title{Combinatorics of cycle lengths on Wehler K3 Surfaces over finite fields}
\author{Joao Alberto de Faria and Benjamin Hutz}
\date{}
\definecolor{green}{rgb}{0,0.5,0}
\definecolor{dkgreen}{rgb}{0,0.6,0}
\definecolor{gray}{rgb}{0.5,0.5,0.5}
\definecolor{mauve}{rgb}{0.58,0,0.82}
\scriptsize\color{black},  
\definecolor{orange}{rgb}{1,0.65,0.17}
\providecommand{\abs}[1]{\left\lvert#1\right\rvert}
\providecommand{\open}[1]{\textbf{Open:} \textcolor{red}{#1}}
\def\Q{\mathbb{Q}}
\def\P{\mathbb{P}}
\def\A{\mathbb{A}}
\def\F{\mathbb{F}}
\newcommand{\col}{\,{:}\,}
\newcommand{\tth}{^{\operatorname{th}}}
\DeclareMathOperator{\Fix}{Fix}
\theoremstyle{plain}
\newtheorem{thm}{Theorem}[section]
\newtheorem{lem}[thm]{Lemma}
\newtheorem{prop}[thm]{Proposition}
\newtheorem{cor}[thm]{Corollary}
\theoremstyle{definition}
\newtheorem*{defn}{Definition}
\newtheorem{exmp}{Example}[section]
\newtheorem*{exmp*}{Example}
\theoremstyle{remark}
\newtheorem*{rem}{Remark}
\begin{document}
\begin{abstract}
    We study the dynamics of maps arising from the composition of two non-commuting involution on a K3 surface. These maps are a particular example of reversible maps, i.e., maps with a time reversing symmetry. The combinatorics of the cycle distribution of two non-commuting involutions on a finite phase space was studied by Roberts and Vivaldi. We show that the dynamical systems of these K3 surfaces satisfy the hypotheses of their results, providing a description of the cycle distribution of the rational points over finite fields. Furthermore, we extend the involutions to include the case where there are degenerate fibers and prove a description of the cycle distribution in this more general situation.
\end{abstract}

\maketitle

\section{Introduction}
    This article examines the dynamics of a particular class of reversible maps that arise as automorphisms of a K3 surface. For a survey of time-reversing symmetries in dynamical systems, see \cite{LambRoberts}. We are interested mainly in the distribution of cycle lengths when the surface is defined over a finite field (i.e., when the map has a finite phase space). In the particular situation where the map is a composition of two involutions, as is the case for our K3 surfaces, Roberts-Vivaldi \cite{RobertsVivaldi} give a combinatorial description of the cycle distribution if the fixed points of the involutions satisfy certain properties. We demonstrate that their hypotheses hold for this class of K3 surfaces. Using an idea of Baragar \cite{Baragar}, we then extend the involutions on our K3 surfaces to the case when the surface has degenerate fibers (i.e. fibers of dimension 1).  We show that the fixed points of these extended involutions also satisfy the necessary properties, again yielding a combinatorial description of the cycle distribution from Roberts-Vivaldi \cite{RobertsVivaldi}.

\subsection{Reversible Dynamical Systems}
    We give a brief summary of definitions and results from the dynamics of reversible maps that are used in this article. Let $\phi: V \subseteq \mathbb{P}^N \to V$ be a morphism on a variety $V$.  We denote the $n\tth$ iterate of $\phi$ as
    \begin{equation*}
        \phi^n = \phi \circ \phi^{n-1}.
    \end{equation*}
    We say that $P \in V$ is a \emph{periodic point of period $n$ for $\phi$} if
    \begin{equation*}
        \phi^n(P) = P
    \end{equation*}
    and of \emph{minimal period $n$} if, in addition, for all $m < n$
    \begin{equation*}
        \phi^m(P) \neq P.
    \end{equation*}
    Such a map is called \emph{reversible} if there exists a map $R:V \to V$ such that
    \begin{equation*}
        R^{-1} \circ \phi \circ R = \phi^{-1}.
    \end{equation*}
    The map $R$ is called a \emph{reversor} for $\phi$.
    \begin{exmp}
        If $\phi$ is the composition of two involutions, $\phi = I_1 \circ I_2$, then $\phi$ is reversible since
        \begin{equation*}
            I_2^{-1} \circ \phi \circ I_2 = I_2 \circ I_1 = \phi^{-1}.
        \end{equation*}
    \end{exmp}
    \begin{defn}
        A cycle is \emph{symmetric} for $\phi = I_1 \circ I_2$ if it is invariant under $I_1$ (or $I_2$). Otherwise, the cycle is called \emph{asymmetric}.
    \end{defn}
    It is known that the number of symmetric cycles is determined by the number of fixed points of the involutions \cite{RobertsVivaldi2}. In particular, the number of symmetric cycles is given by
    \begin{equation*}
        \frac{\#\Fix(I_1) + \#\Fix(I_2)}{2}.
    \end{equation*}
    The following result on the distribution of cycle lengths was first conjectured by Roberts-Vivaldi \cite[Conjecture 1]{RobertsVivaldi} for polynomial automorphisms  of the plane. They were able to prove a more general statement several years later, which we recall below.

    Consider the following general combinatorial situation of the composition of two involutions on a set $S$ with $\phi=I_1 \circ I_2: S \to S$.
    \begin{defn}
        We define a distribution
        \begin{equation*}
            R_N(x) = \frac{1}{N}\{x \in S \col x \text{ has minimal period } \leq tz\},
        \end{equation*}
        where $\#S=N$ and
        \begin{equation*}
            z = \frac{2N}{\#\Fix(I_1) + \#\Fix(I_2)}
        \end{equation*}
        is a scaling parameter. Finally, define
        \begin{equation*}
            R(x) = 1-e^{-x}(1+x).
        \end{equation*}
    \end{defn}

    \begin{thm}[\textup{\cite[Theorem A]{RobertsVivaldi2}}] \label{thm_RV}
        Let $(I_1,I_2)$ be a pair of involutions on a set $S$ with $N$ points and let $i_1(N) = \#\Fix(I_1)$ and $i_2(N) = \#\Fix(I_2)$. If $i_1$ and $i_2$ satisfy
        \begin{equation*}
            \lim_{N \to \infty} i_1(N) + i_2(N) = \infty \qquad \lim_{N \to \infty} \frac{i_1(N) + i_2(N)}{N}=0,
        \end{equation*}
        then for all $x \geq 0$ we have
        \begin{equation*}
            \lim_{N \to \infty} R_N(x) = R(x).
        \end{equation*}
        Moreover, almost all points belong to symmetric cycles.
    \end{thm}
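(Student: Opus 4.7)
The target function $R(x) = 1-e^{-x}(1+x)$ is the cumulative distribution function of the Gamma$(2,1)$ law, whose density $xe^{-x}$ is precisely the size-biased exponential density $e^{-x}$. Since choosing a uniformly random point in $S$ is the same as size-biasing the cycle-length distribution by cycle length, the theorem is equivalent to the statement that, after normalizing by $z$, the empirical distribution of symmetric cycle lengths converges to $\operatorname{Exp}(1)$. My plan is to reduce to this empirical exponential limit law and then obtain it by a moment calculation.

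First I would establish the exact combinatorial count of symmetric cycles. The relation $I_2 \circ \phi \circ I_2 = \phi^{-1}$ shows that $I_2$ permutes the set of $\phi$-orbits, and the fixed orbits are precisely the symmetric cycles. On a symmetric cycle of length $n$, parameterized so that $\phi$ acts as $k \mapsto k+1 \pmod n$, the relation forces $I_2$ to act as a reflection $k \mapsto j - k \pmod n$ and $I_1$ as $k \mapsto j+1 - k \pmod n$ for some $j$. A parity analysis in $n$ and $j$ shows that the joint fixed-point set of $I_1$ and $I_2$ on the cycle has exactly two elements in every case. Summing over cycles yields $(i_1(N) + i_2(N))/2$ as the exact number of symmetric cycles, so $z = 2N/(i_1+i_2)$ is their mean length (assuming the asymmetric part is negligible).

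Second, I would handle the ``moreover'' clause. Asymmetric $\phi$-orbits carry no fixed points of either involution and come in equal-length $I_2$-paired couples. Under the hypotheses $i_1+i_2 \to \infty$ and $(i_1+i_2)/N \to 0$, a double-counting argument — comparing the total mass on asymmetric orbits to the number of $I_2$-pairs of points in general position — should give that the asymmetric mass is $o(N)$, so asymptotically all points lie on symmetric cycles and the cycle-count $k := (i_1+i_2)/2$ controls everything.

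The main step, and the principal obstacle, is to prove the exponential limit law for the normalized symmetric cycle lengths $\ell_i/z$. My approach would be the method of moments: for each integer $m \geq 1$, compute
\[
  \frac{1}{k}\sum_{i=1}^{k} \left(\frac{\ell_i}{z}\right)^m
\]
and show convergence to $m!$, which characterizes $\operatorname{Exp}(1)$. The sums $\sum_i \ell_i^m$ admit a clean combinatorial interpretation as the number of ordered $m$-tuples of points lying on a common symmetric cycle, and the structural hypotheses on $i_1+i_2$ are designed precisely so that these counts factorize asymptotically as products. The hardest part is to make this factorization rigorous, since the orbit structure of $\phi$ is not itself random: one needs to show that, under the two hypotheses alone, the set $\Fix(I_1) \cup \Fix(I_2) \subset S$ behaves like a Poissonized sparse sample, which is the combinatorial heart of the argument in \cite{RobertsVivaldi2}. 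Once the empirical exponential law is established, size-biasing converts $\operatorname{Exp}(1)$ into Gamma$(2,1)$ for the cycle length of a uniformly random point, and the CDF is exactly $R(x)$, completing the proof; the final ``almost all points on symmetric cycles'' assertion then follows from the $o(N)$ bound obtained in the second step.
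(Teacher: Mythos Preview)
The paper does not prove Theorem~\ref{thm_RV}; it is quoted from \cite{RobertsVivaldi2} and invoked as a black box in the proofs of Theorem~\ref{thm2} and its degenerate analogue. There is therefore no proof in the paper to compare your proposal against.

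On the substance of your sketch: the preliminary observations are correct and standard --- the identification of $R(x)$ as the size-biased $\operatorname{Exp}(1)$ CDF, and the exact count $(i_1+i_2)/2$ for symmetric cycles via the parity analysis of reflections on a cycle. But the core of your argument has a genuine gap in its framing. The hypotheses $i_1+i_2\to\infty$ and $(i_1+i_2)/N\to 0$ constrain only the fixed-point \emph{counts}; a deterministic pair $(I_1,I_2)$ meeting them can have essentially arbitrary cycle statistics. For instance, one can place all $(i_1+i_2)/2$ symmetric cycles on $O(i_1+i_2)$ points and put the remaining $N-O(i_1+i_2)$ points into asymmetric cycles, so the asymmetric mass is $(1-o(1))N$, not $o(N)$; and even restricting to symmetric cycles, one can make them all of equal length, killing any exponential limit. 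The actual result in \cite{RobertsVivaldi2} is a statement about \emph{uniformly random} pairs of involutions on $\{1,\dots,N\}$ with prescribed fixed-point counts, and the convergence is in that probabilistic sense; the paper's own $R_p$ reflects this by averaging $\langle P_t\rangle$ over the family of surfaces. Your moment scheme and the $o(N)$ bound on asymmetric mass become viable only after this randomization is introduced, and the ``Poissonized sparse sample'' heuristic you allude to is precisely the mechanism that makes the expectation calculation go through --- but you defer that calculation to the reference rather than carry it out.
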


    \subsection{Wehler's K3 surfaces}
        We now define our particular dynamical system. A Wehler K3 surface $S \subset \mathbb{P}^{2}_x \times \mathbb{P}^{2}_y$ is
        a smooth surface given by the intersection of an effective divisor of degree (1,1) and an effective divisor of degree (2,2).  In other words,  let $([x_0,x_1,x_2],[y_0,y_1,y_2]) = (\textbf{x},\textbf{y})$ be the coordinates for $\mathbb{P}^2_x \times \mathbb{P}^2_y$; then
        $S$ is the locus described by $L=Q=0$ for
        \begin{equation*}
            L = \sum_{0\leq i,j \leq 2} a_{ij}x_iy_j \qquad
            Q = \sum_{0\leq i,j,k,l \leq 2} b_{ijkl}x_ix_jy_k y_l.
        \end{equation*}
        Wehler \cite{Wehler} first showed that these surfaces have an infinite automorphism group generated by the composition of two involutions.
        \begin{thm}[Wehler {\cite[Theorem 2.9]{Wehler}}] \label{thm1}
            A general K3 surface formed as the vanishing locus of a degree $(1,1)$ and a degree $(2,2)$ effective divisor has Picard number two and an infinite automorphism group.
        \end{thm}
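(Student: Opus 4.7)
The plan is to proceed in three main steps: verify the K3 structure, establish that the Picard rank is exactly two for generic parameters, and then produce infinitely many automorphisms via the action of deck involutions on the Picard lattice.

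First I would verify $S$ is a smooth K3 surface. Since $-K_{\P^2\times\P^2}=\O(3,3)$ and the defining divisors have bidegrees $(1,1)$ and $(2,2)$, adjunction gives $K_S=\O_S$; the vanishing $h^1(\O_S)=0$ follows from the Koszul resolution of $\O_S$ on $\P^2\times\P^2$ combined with K\"unneth and Bott vanishing on $\P^2$. Next I would extract the two Picard classes: the restrictions $H_1,H_2$ of the hyperplane classes from the two factors are independent, since a direct intersection calculation in $\Z[h_1,h_2]/(h_1^3,h_2^3)$ against the class $(h_1+h_2)(2h_1+2h_2)$ of $S$ yields the intersection form
\[
\begin{pmatrix}H_1^2 & H_1\cdot H_2\\ H_1\cdot H_2 & H_2^2\end{pmatrix}=\begin{pmatrix}2 & 4\\ 4 & 2\end{pmatrix},
\]
of determinant $-12\neq 0$, so the Picard number satisfies $\rho(S)\geq 2$. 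For the matching upper bound, I would invoke a standard Noether--Lefschetz/period argument: the family of $(1,1)\cap(2,2)$ surfaces is positive-dimensional inside the $19$-dimensional moduli of polarized K3 surfaces, and a very general member picks up no extra algebraic cycles beyond those coming from the ambient $\P^2\times\P^2$.

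Finally, I would construct the two involutions and show their composition has infinite order. Each projection $p_i\col S\to\P^2$ is generically two-to-one: for generic $\mathbf{x}\in\P^2$, the fiber over $\mathbf{x}$ is the intersection in $\P^2_y$ of the line $L(\mathbf{x},\cdot)=0$ with the conic $Q(\mathbf{x},\cdot)=0$, giving two points. The deck involution swapping these points extends regularly to $S$ for very general $S$. Using $\sigma_2^*H_1=H_1$ and the push-pull identity $\sigma_2^*H_2+H_2=p_1^*p_{1*}H_2=4H_1$ (where $\deg p_{1*}H_2=H_1\cdot H_2=4$ by the projection formula), and symmetrically for $\sigma_1$, the matrix of $(\sigma_1\circ\sigma_2)^*$ in the basis $(H_1,H_2)$ computes to $\bigl(\begin{smallmatrix}15&4\\-4&-1\end{smallmatrix}\bigr)$, with characteristic polynomial $\lambda^2-14\lambda+1$ and eigenvalues $7\pm 4\sqrt{3}$. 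These are not roots of unity, so $\sigma_1\circ\sigma_2$ has infinite order in $\Aut(S)$.

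The main obstacle I anticipate is the generic Picard-rank statement, which is not a local computation but a moduli-theoretic assertion requiring a Noether--Lefschetz/period argument to rule out unexpected divisor classes on a very general member; by contrast, the K3 verification and the infinite-order argument reduce to adjunction, intersection theory on $\P^2\times\P^2$, and the observation that an integer matrix with determinant $1$ and $|{\rm tr}|>2$ has hyperbolic eigenvalues and hence infinite order.
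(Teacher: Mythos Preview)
The paper does not supply its own proof of this theorem: it is stated as a citation of Wehler's result \cite[Theorem~2.9]{Wehler} and then the involutions are described, with no proof environment following the statement. So there is no in-paper argument to compare against.

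That said, your outline is sound and is essentially the approach of Wehler and of the subsequent literature (e.g.\ Silverman, Call--Silverman). The adjunction and Koszul/K\"unneth verification of the K3 conditions is standard; your intersection matrix $\left(\begin{smallmatrix}2&4\\4&2\end{smallmatrix}\right)$ with determinant $-12$ is correct, and the action of $(\sigma_1\circ\sigma_2)^*$ you compute, with trace $14$ and determinant $1$, indeed has eigenvalues $7\pm4\sqrt{3}$, giving infinite order. Two minor points worth tightening: first, the claim that the deck involutions extend to morphisms of $S$ is exactly the non-degeneracy condition the paper highlights (via \cite[Proposition~1.2]{CallSilverman}), so you should note that a general member has no positive-dimensional fibers; second, your assertion that $\rho(S)=2$ generically is, as you acknowledge, the genuinely non-elementary step, and in Wehler's paper it is handled by a deformation-theoretic/Noether--Lefschetz argument rather than a direct computation. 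Everything else is routine.
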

        The involutions are defined as follows. The natural projections
        \begin{equation*}
            \rho_x: \mathbb{P}^{2}_x \times \mathbb{P}^2_y \to \mathbb{P}^2_x, \quad
            \rho_y: \mathbb{P}^{2}_x \times \mathbb{P}^2_y \to \mathbb{P}^2_y
        \end{equation*}
        induce two projection maps:
        \begin{equation*}
            p_x: S \to \mathbb{P}^2_x, \quad
            p_y: S \to \mathbb{P}^2_y.
        \end{equation*}
        The projections $p_x$ and $p_y$ are in general double covers, allowing us to define two involutions of $S$, say $\sigma_x$ and $\sigma_y$, respectively.  The maps $\sigma_x$ and $\sigma_y$ are in general just rational maps.  However, if $S=V(L,Q)$ is smooth, Call and Silverman \cite[Proposition 1.2]{CallSilverman} show that $\sigma_x$ and $\sigma_y$ are morphisms of $S$ if and only if $S$ has no degenerate fibers, fibers of positive dimension.  We call a surface with no degenerate fibers a \emph{non-degenerate} surface. Call and Silverman \cite[Appendix]{CallSilverman} give explicit formulas for computing $\sigma_x$ and $\sigma_y$ and, hence, any $\tau \in \mathcal{A}$.  We adopt their notation and define
        \begin{align*}
            L_j^{x} &= \text{ the coefficient of } y_j \text{ in } L(\textbf{x},\textbf{y}), \\
            L_j^{y} &= \text{ the coefficient of } x_j \text{ in } L(\textbf{x},\textbf{y}),\\
            Q_{kl}^{x} &= \text{ the coefficient of } y_ky_l \text{ in } Q(\textbf{x},\textbf{y}),\\
            Q_{ij}^{y} &= \text{ the coefficient of } x_ix_j \text{ in } Q(\textbf{x},\textbf{y}),\\
            G_k^{\ast} &= (L_{j}^{\ast})^2Q_{ii}^{\ast} - L_i^{\ast}L_j^{\ast}Q_{ij}^{\ast} + (L_i^{\ast})^2Q_{jj}^{\ast},\\
            H_{ij}^{\ast} &= 2L_i^{\ast}L_j^{\ast}Q_{kk}^{\ast} - L_i^{\ast}L_k^{\ast}Q_{jk}^{\ast} - L_j^{\ast}L_k^{\ast}Q_{ik}^{\ast} + (L_k^{\ast})^2Q_{ij}^{\ast}
        \end{align*}
        for $(i,j,k)$ some permutation of the indices $\{0,1,2\}$ and $\ast$ replaced by either $x$ or $y$.

        We take our dynamical system as $\phi = \sigma_y \circ \sigma_x$. Thus, for each smooth, non-degenerate Wehler K3 surface, we get a reversible dynamical system that is the composition of two involutions. The arithmetic and dynamical properties of these surfaces have received considerable attention in recent years, \cite{Baragar, Baragar2, CallSilverman, Hutz, Silverman}.

\section{Main Results}

\subsection{Distribution}
    Let $S$ be a Wehler K3 surface and $S(K)$ be the rational points on $S$ defined over the field $K$. We denote the finite field with $p$ elements as $\F_p$. The following definition sets up the distribution function of cycle lengths in a fashion similar to Roberts-Vivaldi \cite{RobertsVivaldi2}.
    \begin{defn}
        Let $S$ be a Wehler K3 surface. Define
        \begin{equation*}
            P_t = \frac{1}{\#S(\F_p)} \{P \in S(\F_p) \col P \text{ has minimal period } t\}.
        \end{equation*}
        For a given surface $S$, the sequence $P_t$ contains only finitely many non-zero terms. We consider the distribution function
        \begin{equation*}
            R_p(x) = \sum_{t =1}^{\lfloor xz \rfloor} \langle P_t \rangle,
        \end{equation*}
        where $\lfloor \cdot \rfloor$ represents the greatest integer part (the floor function), the average $\langle \cdot \rangle$ is computed with respect to uniform probability on the set of Wehler K3 surfaces $S$, and
        \begin{equation*}
            z = \frac{2 N}{\#\Fix(\sigma_x) + \#\Fix(\sigma_y)},
        \end{equation*}
        where $N$ is the average value of $\#S(\F_p)$.
    \end{defn}

    \begin{lem}\label{lem1}
        Let $S$ be a Wehler K3 surface. Then, for $p$ an integer prime,
        \begin{equation*}
            \#S(\F_p) \geq p^2-22p+1.
        \end{equation*}
    \end{lem}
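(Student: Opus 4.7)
The plan is to invoke the Weil conjectures (Deligne's theorem) applied to the K3 surface $S$. Since $S$ is, by hypothesis, a smooth complete intersection of divisor classes $(1,1)$ and $(2,2)$ in $\P^2 \times \P^2$, adjunction identifies its canonical class as trivial (the canonical class of $\P^2 \times \P^2$ is $\mathcal{O}(-3,-3)$, and the sum of the two divisor classes is $\mathcal{O}(3,3)$), so $S$ is indeed a K3 surface in the usual sense. In particular, its Betti numbers are
\begin{equation*}
    b_0 = b_4 = 1, \qquad b_1 = b_3 = 0, \qquad b_2 = 22.
\end{equation*}

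The key step is to apply the Lefschetz trace formula for $\ell$-adic étale cohomology,
\begin{equation*}
    \#S(\F_p) \;=\; \sum_{i=0}^{4}(-1)^i \operatorname{tr}\bigl(\operatorname{Frob}^{\ast} \,\big|\, H^i_{\text{\'et}}(\bar S, \Q_\ell)\bigr),
\end{equation*}
for any prime $\ell \neq p$. The degree $0$ and degree $4$ terms contribute $1$ and $p^2$ respectively, and the odd-degree terms vanish because $b_1 = b_3 = 0$. For the middle cohomology, Deligne's theorem says each of the $22$ eigenvalues of Frobenius on $H^2$ has absolute value exactly $p$, so the trace in that degree has absolute value at most $22p$.

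Combining these estimates gives $\lvert \#S(\F_p) - p^2 - 1 \rvert \leq 22p$, whence $\#S(\F_p) \geq p^2 - 22p + 1$, which is the stated inequality. The main ``obstacle'' is really just identifying $S$ as an honest K3 surface so that the Weil bound with $b_2 = 22$ applies; once this is in place, the conclusion is immediate from Deligne's theorem and no further computation is required.
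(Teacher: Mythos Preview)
Your proof is correct and follows essentially the same approach as the paper: both invoke Deligne's theorem together with the Betti numbers $b_0=b_4=1$, $b_1=b_3=0$, $b_2=22$ of a K3 surface to bound the Frobenius eigenvalues and conclude $\lvert \#S(\F_p) - p^2 - 1\rvert \le 22p$. The only cosmetic difference is that the paper phrases this via the zeta function $Z(S,T)$ and its factorization, while you use the equivalent Lefschetz trace formula directly.
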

    \begin{proof}
        Let $p \in \mathbb{Z}$ be a prime and let
        $N_m = \#S(\mathbb{F}_{p^m})$ be the number of $\mathbb{F}_{p^m}$-rational points on $S$. The \emph{Riemann zeta function} of $S$ is
        \begin{equation*}
            Z(S,T) = \exp\left(\sum_{m =1}^{\infty} N_m \frac{T^m}{m}\right),
        \end{equation*}
        where $\exp$ denotes exponentiation. The Riemann zeta function satisfies the Riemann Hypothesis, as shown by Deligne \cite{Deligne}.  We have that $\dim(S) =2$ and, since $S$ is a K3 surface it has Betti numbers $b_0=b_4=1$, $b_1=b_3=0$, and $b_2=22$. Thus, we have that
        \begin{align*}
            Z(S,T) &= \exp\left(\sum_{m =1}^{\infty} N_m \frac{T^m}{m}\right) =
            \frac{1}{P_0(T)P_2(T)P_{4}(T)}\\
            &=\frac{1}{(1-T)(1-p^2T)(\prod_{i=1}^{22}(1-\alpha_iT))}
        \end{align*}
        with $\abs{\alpha_i} =p$. We can take the natural logarithm of both sides, expand, and compare coefficients to get
        \begin{equation*}
            \#S(\mathbb{F}_p) = N_1 \geq 1 + p^2 - \sum_{i=1}^{22} \abs{\alpha_i} \geq 1 + p^2 - 22p.
        \end{equation*}
    \end{proof}

    \begin{thm}\label{thm2}
        Let $S$ be a non-degenerate Wehler K3 surface. We have
        \begin{equation*}
            \lim_{p \to \infty} R_p(x) = R(x) = 1 + e^{-x}(1+x).
        \end{equation*}
        Moreover, almost all cycles are symmetric.
    \end{thm}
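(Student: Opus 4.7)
The strategy is to apply Theorem \ref{thm_RV} to the pair $(I_1, I_2) = (\sigma_x, \sigma_y)$ acting on $S(\F_p)$, and to transfer the pointwise convergence guaranteed by that theorem to the averaged distribution $R_p(x)$. Because $S$ is smooth and non-degenerate, the Call-Silverman theorem ensures that $\sigma_x$ and $\sigma_y$ are morphisms, so $\#\Fix(\sigma_x)$ and $\#\Fix(\sigma_y)$ are well-defined. From Lemma \ref{lem1}, together with the matching Deligne upper bound $\#S(\F_p) \leq p^2 + 22p + 1$ obtained by the same zeta-function computation, we get $N := \#S(\F_p) = p^2 + O(p)$; in particular $N = \Theta(p^2)$.

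The core technical task is to show that $\#\Fix(\sigma_x)$ and $\#\Fix(\sigma_y)$ are each of order $p$. A point $P = (\textbf{x}, \textbf{y}) \in S$ is fixed by $\sigma_x$ exactly when the fibre of $p_x : S \to \P^2_x$ through $P$ is ramified, i.e., when the line $L(\textbf{x}, \cdot) = 0$ is tangent in $\P^2_y$ to the conic $Q(\textbf{x}, \cdot) = 0$. Using the Call-Silverman formulas (the polynomials $G_k^x$ and $H_{ij}^x$ above), this tangency condition can be written as the vanishing of an explicit discriminant polynomial, cutting out a curve $C_x \subset S$; analogously one gets $C_y \subset S$ from $\sigma_y$. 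Each of $C_x, C_y$ sits in $\P^2_x \times \P^2_y$ with bidegree depending only on the bidegrees of $L$ and $Q$, so by the Weil bound for curves,
\[
\#C_x(\F_p) = p + O(\sqrt{p}), \qquad \#C_y(\F_p) = p + O(\sqrt{p}),
\]
provided these curves are geometrically irreducible, which holds for a generic choice of $(L, Q)$.

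Combining the two estimates, the hypotheses of Theorem \ref{thm_RV} become
\[
i_1(N) + i_2(N) = \Theta(p) \to \infty, \qquad \frac{i_1(N) + i_2(N)}{N} = O(p^{-1}) \to 0,
\]
as $p \to \infty$, so Theorem \ref{thm_RV} yields the pointwise convergence $R_N(x) \to R(x) = 1 - e^{-x}(1+x)$ and the symmetric-cycle conclusion for each generic Wehler K3 surface. Averaging over Wehler K3 surfaces and observing that the non-generic stratum has vanishing density in the parameter space as $p \to \infty$ then upgrades this to the averaged statement for $R_p(x)$.

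The main obstacle is pinning down this non-generic stratum: one must check that for a generic choice of $(L, Q)$ over $\F_p$ the ramification curves $C_x$ and $C_y$ are geometrically irreducible of the expected bidegree, so that the Weil bounds genuinely produce $\Omega(p)$ rational fixed points rather than collapsing on some special sublocus. Once this genericity claim is established, the remainder of the argument is routine Weil-bound bookkeeping together with a direct invocation of Theorem \ref{thm_RV}.
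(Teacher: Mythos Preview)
Your overall strategy matches the paper's exactly: verify the two hypotheses of Theorem~\ref{thm_RV} by bounding $\#\Fix(\sigma_x)$ and $\#\Fix(\sigma_y)$ via Weil-type estimates on the ramification locus, together with the zeta-function bound $\#S(\F_p)=p^2+O(p)$ from Lemma~\ref{lem1}.

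The substantive difference is how the ramification locus is handled. You describe $C_x,C_y$ abstractly as discriminant curves in $S\subset\P^2\times\P^2$, then worry about their geometric irreducibility, flag this as the ``main obstacle,'' and propose to rescue the argument by averaging over surfaces and discarding a non-generic stratum. The paper bypasses all of this by invoking \cite[Proposition~2.1]{CallSilverman}, which gives the ramification locus of $\sigma_\ast$ explicitly as the plane sextic $g_\ast=0$ in $\P^2_\ast$ (the explicit formula is reproduced in the proof). For a non-degenerate $S$ this curve is smooth, hence genus~$10$, and Hasse--Weil yields
\[
\bigl|\#\Fix(\sigma_\ast)-(p+1)\bigr|\le 20\sqrt{p}
\]
directly, for \emph{every} such surface, not merely generic ones. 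The two limit conditions then follow immediately from this and Lemma~\ref{lem1}, with no genericity argument and no separate averaging step needed. So your ``main obstacle'' is not a genuine obstacle once the explicit Call--Silverman description is on the table; your proposal is correct in outline but takes an unnecessarily circuitous route around a door that is already open.
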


    \begin{proof}
        Call and Silverman \cite[Proposition 2.1]{CallSilverman} describe the ramification curves $g^x$, $g^y$ as
        \begin{align*}
            g_{\ast} &= L_0^{\ast 2}Q_{12}^{\ast 2} + L_1^{\ast 2}Q_{02}^{\ast 2} + L_2^{\ast 2}Q_{01}^{\ast 2} -2L_0^{\ast}L_1^{\ast}Q_{02}^{\ast}Q_{12}^{\ast} - 2L_0^{\ast}L_2^{\ast}Q_{01}^{\ast}Q_{12}^{\ast} - 2L_1^{\ast}L_2^{\ast}Q_{01}^{\ast}Q_{02}^{\ast}\\
            &+ 4L_0^{\ast}L_1^{\ast}Q_{01}^{\ast}Q_{22}^{\ast} + 4L_0^{\ast}L_2^{\ast}Q_{02}^{\ast}Q_{11}^{\ast} + 4L_1^{\ast}L_2^{\ast}Q_{12}^{\ast}Q_{00}^{\ast} -4L_0^{\ast 2}Q_{11}^{\ast}Q_{22}^{\ast} - 4L_1^{\ast 2}Q_{00}^{\ast}Q_{22}^{\ast} - 4L_2^{\ast 2}Q_{11}^{\ast}Q_{00}^{\ast}.
        \end{align*}
        These are smooth degree 6 curves in $\P^2$ which describe the fixed points of the involutions $\sigma_x$ and $\sigma_y$. As such, we can apply the Hasse-Weil bounds for a genus $10$ curve to have that on $S(\F_p)$
        \begin{equation*}
            \abs{\#\Fix(\sigma_{\ast})-(p+1)} \leq 20\sqrt{p}.
        \end{equation*}
        In particular,
        \begin{equation} \label{eq_hasse1}
            (p+1)-20\sqrt{p} \leq \#\Fix(\sigma_{\ast}) \leq (p+1)+20\sqrt{p}.
        \end{equation}
        To apply Theorem \ref{thm_RV}, we need to show
        \begin{align}
            \lim_{p \to \infty} &\#\Fix(\sigma_x) + \#\Fix(\sigma_y) = \infty \label{eq2}\\
            \lim_{p \to \infty} &\frac{\#\Fix(\sigma_x) + \#\Fix(\sigma_y)}{\#S(\F_p)}=0. \label{eq3}
        \end{align}
        We consider the limit of the lower Hasse-Weil bound of (\ref{eq_hasse1}),
        \begin{equation*}
            \lim_{p \to \infty} (p+1)-20\sqrt{p} = \infty.
        \end{equation*}
        Therefore,
        \begin{equation*}
            \lim_{p \to \infty} \#\Fix(\sigma_x) + \#\Fix(\sigma_y) = \infty,
        \end{equation*}
        satisfying the first property (\ref{eq2}).

        To show that (\ref{eq3}) holds, we consider the upper Hasse-Weil bound of (\ref{eq_hasse1}) and a lower bound on $\#S(\F_p)$ from Lemma \ref{lem1},
        \begin{equation*}
            \lim_{p \to \infty} \frac{\#\Fix(\sigma_x) + \#\Fix(\sigma_y)}{\#S(\F_p)} < \lim_{p \to \infty} \frac{2((p+1)+20\sqrt{p})}{p^{2}-22p+1} = 0.
        \end{equation*}
        Since the fraction is always nonnegative, we have our result by applying Theorem \ref{thm_RV}.
    \end{proof}

    It is interesting to note that even for small primes, the actual distribution is extremely close to the limiting distribution. Figure \ref{figure1} shows the experimentally gathered cycle distributions (the dots) versus the limiting distribution $y=R(x) = 1+e^{-x}(1+x)$. Figure \ref{figure2} shows the error calculated from the difference in area under the curves. The $y$-axis is percent error $\abs{\frac{\text{actual value} - \text{experimental value}}{\text{actual value}}}$. The $x$-axis is $p$ for $\F_p$. The data is from 100 randomly generated Wehler K3 surfaces over $\F_p$ for
    \begin{equation*}
        p \in \{29, 37, 59, 61, 83, 113, 131, 149, 167, 181, 191, 223, 251, 269, 307, 353, 401, 457, 503\} \footnote{If the surface was degenerate for the listed prime, we used the next biggest good prime.}.
    \end{equation*}
    The computations were performed in Sage \cite{sage}.
    \begin{figure}
        \begin{center}
            \caption{Average Distribution}
            \label{figure1}
            \includegraphics[angle=0,scale=.45]{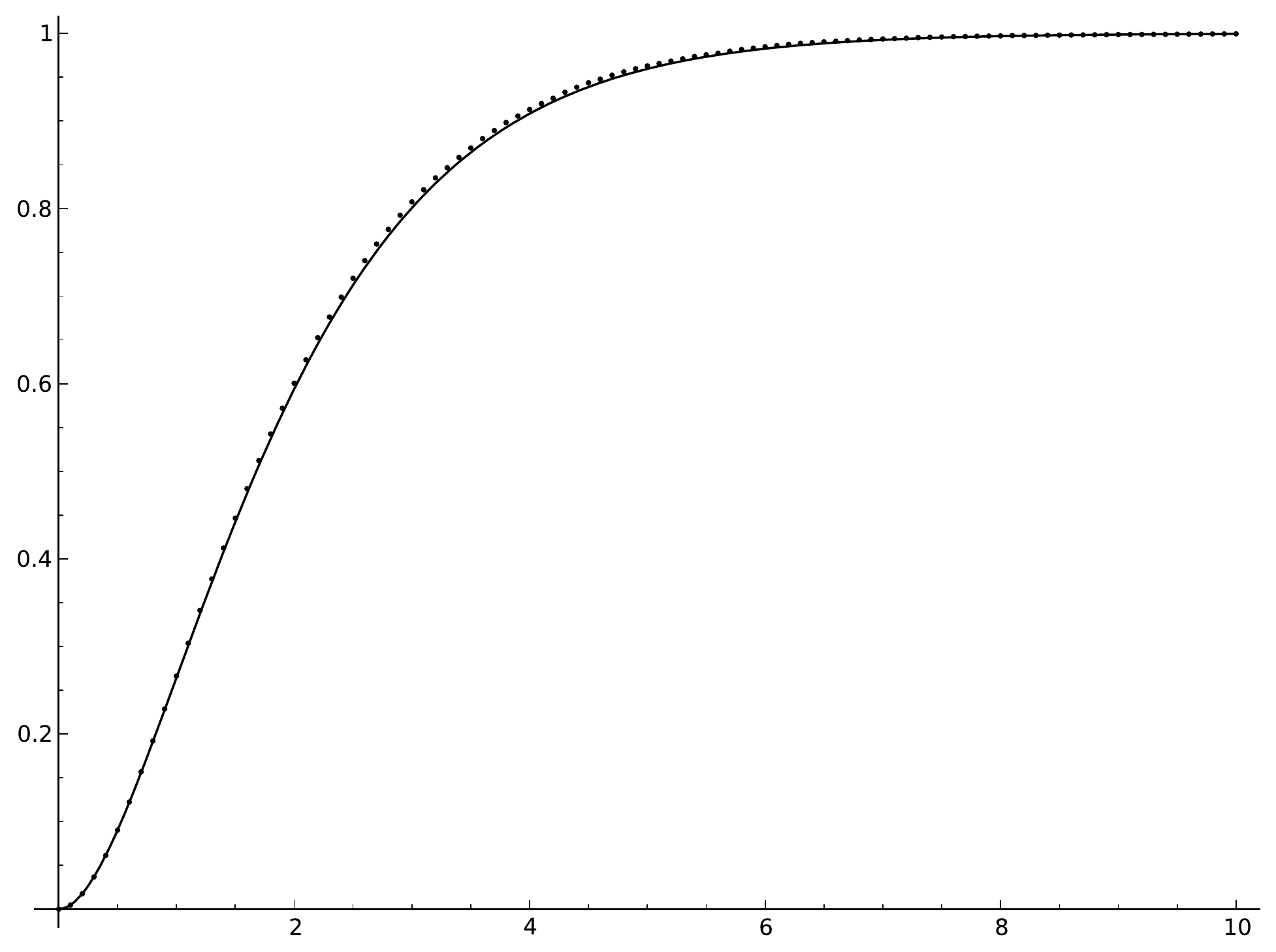}
        \end{center}
    \end{figure}

    \begin{figure}
        \begin{center}
            \caption{Error Data}
            \label{figure2}
            \includegraphics[angle=0,scale=.45]{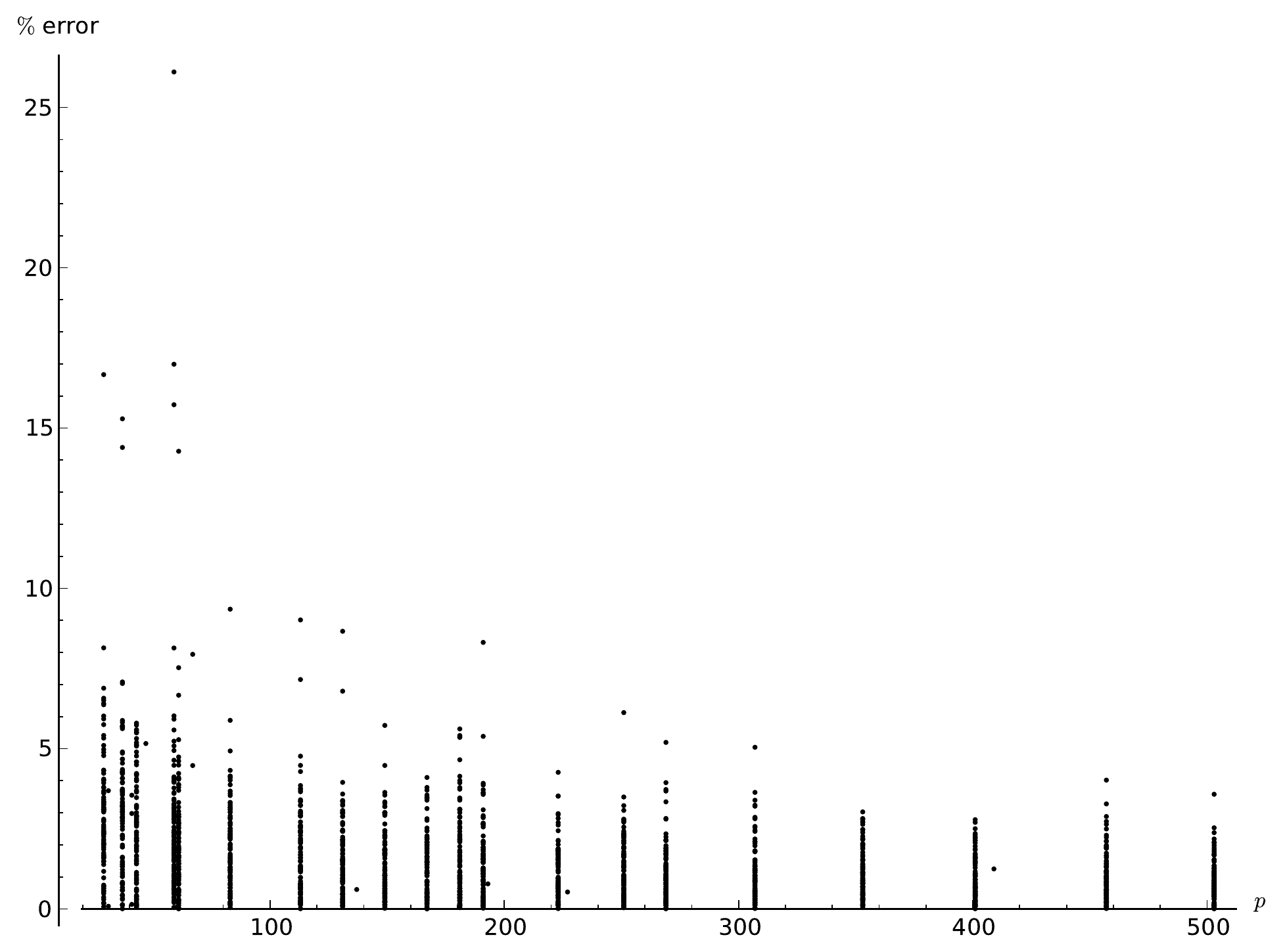}
        \end{center}
    \end{figure}

%
%
%
%

\subsection{Asymmetric Cycles}
    For the composition of two involutions, the reason the fixed points of the involutions play such a dominating role is that one of the points in the symmetric cycle must be a fixed point of each involution. For asymmetric cycles, this is not true and causes asymmetric cycles to always come in pairs. Figure \ref{figure5} gives a graphical representation as to why this is true.
    \begin{prop}
        Let $S$ be a Wehler K3 surface and $\phi=\sigma_{y}\circ\sigma_{x}$ the composition of the two involutions. All asymmetric cycles of $\phi$ of minimal period $n$ come in pairs.
    \end{prop}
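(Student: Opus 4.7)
The plan is to use the reversor relation $\sigma_x \circ \phi \circ \sigma_x = \phi^{-1}$ (which holds because $\sigma_x$ and $\sigma_y$ are involutions) to define a fixed-point-free involution on the set of asymmetric $\phi$-cycles of minimal period $n$, which automatically groups them into pairs. Specifically, I will send a cycle $C$ to $\sigma_x(C)$ and verify that this operation: (i) produces a $\phi$-cycle of the same minimal period, (ii) produces an asymmetric cycle, (iii) is not the identity, and (iv) is an involution.

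For step (i), I would write $C=\{P,\phi(P),\dots,\phi^{n-1}(P)\}$ and use the identity $\sigma_x\circ\phi^k=\phi^{-k}\circ\sigma_x$ to get
\begin{equation*}
\sigma_x(C)=\{\sigma_x(P),\phi^{-1}(\sigma_x(P)),\dots,\phi^{-(n-1)}(\sigma_x(P))\},
\end{equation*}
which is exactly the $\phi$-orbit of $\sigma_x(P)$; since $\sigma_x$ is a bijection this orbit has $n$ elements, so it is a $\phi$-cycle of minimal period $n$. For step (iv), $\sigma_x$ is an involution, so $\sigma_x(\sigma_x(C))=C$. For step (iii), if $\sigma_x(C)=C$ then $C$ is $\sigma_x$-invariant, hence symmetric, contradicting our assumption; this simultaneously gives (ii), because if $\sigma_x(C)$ were symmetric (invariant under $\sigma_x$), then applying $\sigma_x$ again would show $C$ itself is symmetric.

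Combining these observations, the map $C\mapsto\sigma_x(C)$ is a fixed-point-free involution on the set of asymmetric $\phi$-cycles of minimal period $n$, so this set decomposes into orbits of size two, i.e., the cycles come in pairs.

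The main (and essentially only) subtlety to be careful about is cleanly establishing the conjugation identity $\sigma_x\circ\phi\circ\sigma_x=\phi^{-1}$ and its iterate $\sigma_x\circ\phi^k=\phi^{-k}\circ\sigma_x$, since everything else follows from counting arguments. One might also remark that the analogous argument with $\sigma_y$ gives the same pairing, since $\sigma_y(P)=\phi\circ\sigma_x(P)$ lies in the same $\phi$-orbit as $\sigma_x(P)$, so $\sigma_y(C)=\sigma_x(C)$; this shows the pairing is canonical and matches the intuition depicted in Figure~\ref{figure5}.
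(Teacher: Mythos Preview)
Your proposal is correct and follows essentially the same route as the paper: both arguments use the reversor identity $\sigma_x\circ\phi\circ\sigma_x=\phi^{-1}$ (the paper derives it by explicitly regrouping $\sigma_x\circ(\sigma_y\circ\sigma_x)^n$) to show that $\sigma_x$ carries a $\phi$-cycle of minimal period $n$ to another $\phi$-cycle of the same minimal period, and then invoke asymmetry to conclude the two cycles are distinct. Your write-up is in fact a bit more explicit than the paper's about why the map $C\mapsto\sigma_x(C)$ is a fixed-point-free involution (the paper leaves the ``$\sigma_x(C)\neq C$'' step implicit), and your closing remark that $\sigma_y$ induces the same pairing is a nice addition not spelled out in the original.
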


    \begin{proof}
        Let $P$ be a point of minimal period $n$ for $\phi$.  Consider the point $Q = \sigma_x(P)$. By the assumption on $P$, we also have
        \begin{equation}\label{eq1}
            Q = \sigma_x(\phi^n(P)).
        \end{equation}
        Now recall that $\phi=\sigma_{y}\circ\sigma_{x}$ and expand $\phi^{n}$ in (\ref{eq1}) as
        \begin{equation*}
            Q=\sigma_{x}\circ(\sigma_{y}\circ\sigma_{x})\circ(\sigma_{y}\circ\sigma_{x})...(\sigma_{y}\circ\sigma_{x})(P).
        \end{equation*}
        Regrouping the compositions, we have
        \begin{equation*}
            Q=(\sigma_{x}\circ\sigma_{y})\circ(\sigma_{x}\circ\sigma_{y})...(\sigma_{x}\circ\sigma_{y})\circ(\sigma_{x} (P))=(\phi^{-1})^n(\sigma_x(P)) = (\phi^{-1})^n(Q).
        \end{equation*}
        Now we just need to check that $Q$ has minimal period $n$. Assume that $\phi^m(Q) =Q$ for some $m < n$. Then with the same argument in reverse, we would necessarily have $\phi^m(P) =P$, which is a contradiction.

        Therefore, given a periodic point $P$ with symmetric cycle of minimal period $n$, then $\sigma_x(P)$ is also periodic with a symmetric cycle of minimal period $n$.
    \end{proof}

   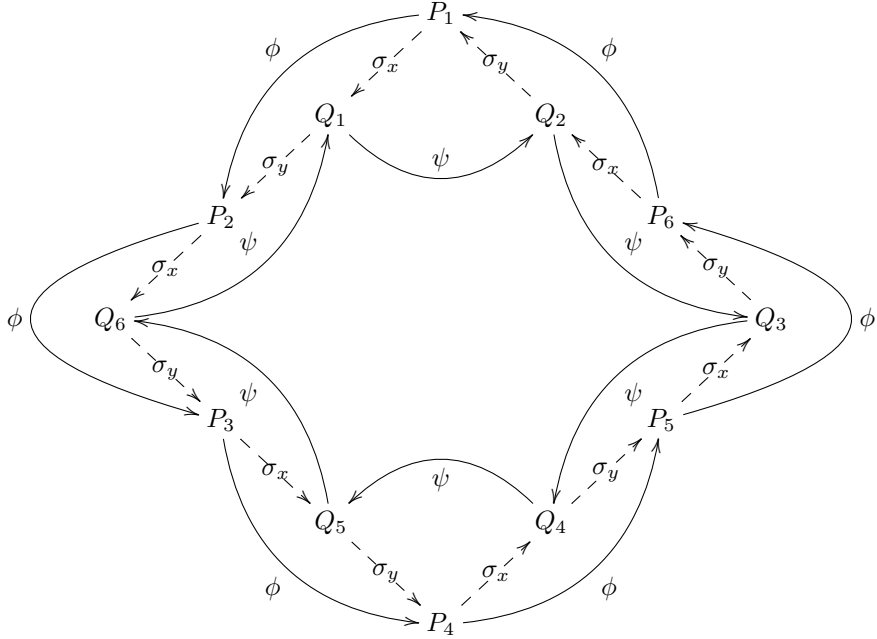
\begin{figure}
        \begin{center}
            \caption{Asymmetric 6-cycle}
            \label{figure5}
            \[
	 \renewcommand{\labelstyle}{\textstyle}
	 \xymatrix{
	 &&&P_1 \ar@{-->}[ld]|{\sigma_x} \ar@/_2pc/ [lldd] _\phi   \\
	&&Q_1 \ar@{-->}[ld]|{\sigma_y} \ar@/_2pc/ [rr] ^\psi  &&Q_2 \ar@{-->}[lu]|{\sigma_y} \ar@/_2pc/ [rrdd] ^\psi  \\
	&P_2 \ar@{-->}[ld]|{\sigma_x} \ar@/_6pc/ [dd] _\phi  &&&& P_6 \ar@{-->}[lu]|{\sigma_x} \ar@/_2pc/ [lluu] _\phi  \\
	Q_6 \ar@{-->}[rd]|{\sigma_y} \ar@/_2pc/ [rruu] ^\psi  &&&&&& Q_3 \ar@{-->}[lu]|{\sigma_y} \ar@/_2pc/ [lldd] ^\psi  \\
	&P_3 \ar@{-->}[rd]|{\sigma_x} \ar@/_2pc/ [rrdd] _\phi  &&&& P_5 \ar@{-->}[ru]|{\sigma_x} \ar@/_6pc/ [uu] _\phi  \\
	&&Q_5 \ar@{-->}[rd]|{\sigma_y} \ar@/_2pc/ [lluu] ^\psi  &&Q_4 \ar@{-->}[ru]|{\sigma_y} \ar@/_2pc/ [ll] ^\psi  \\
	&&&P_4 \ar@{-->}[ru]|{\sigma_x} \ar@/_2pc/ [rruu] _\phi  \\
	}
	\]
        \end{center}
    \end{figure}

\section{Degenerate Fibers}
    Following the idea sketched in Baragar \cite{Baragar}, we extend the morphisms $\sigma_x$ and $\sigma_y$ to degenerate fibers. The idea is to blow-up the surface at the degenerate points. This provides an isomorphism with the non-degenerate points and replaces the degenerate point with a family of lines. Each of those lines intersects the (blown-up) surface in two points, allowing for an extension of the involutions by again swapping points in the ``fibers.'' We now provide the necessary details.

    Let $P=(P_0,P_1,P_2)$ be a degenerate point on the second projection ($\P^2_y$), i.e., where the fiber $p_y^{-1}(P)$ is dimension 1.  After possibly a projective transformation, we may assume $P=(0,0,1)$. Dehomogenize at $y_2$ and consider the resulting locus $V(\tilde{L},\tilde{Q}) \subset \P^2 \times \A^2$. We label the coordinates of $\A^2$ as $(Y_0,Y_1)$. We blow-up the degenerate point by considering the lines through the origin (in $\A^2$).
    \begin{equation*}
        \tilde{X} = V(\tilde{L},\tilde{Q},s_1Y_0-s_0Y_1) \subset \P^2 \times (\A^2 \times \P^1),
    \end{equation*}
    where $s=(s_0,s_1)$ are the coordinates of $\P^1$.

    \begin{prop}\label{prop1}
        Each line through the degenerate point (in the blow-up) intersects the degenerate fiber in exactly two points.
    \end{prop}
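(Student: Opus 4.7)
The plan is to parameterize a line through the degenerate point in direction $(s_0:s_1)\in\P^1$ by $(Y_0,Y_1)=(ts_0,ts_1)$ for $t\in\A^1$, substitute into the defining equations of $\tilde X$, and read off the fiber at $t=0$ after passing to the proper transform. Since $L$ has bidegree $(1,1)$ and $Q$ has bidegree $(2,2)$, expansion in $t$ produces
\begin{align*}
\tilde L(\mathbf{x}, ts_0, ts_1) &= A(\mathbf{x}) + t\, B_s(\mathbf{x}),\\
\tilde Q(\mathbf{x}, ts_0, ts_1) &= D(\mathbf{x}) + t\, E_s(\mathbf{x}) + t^2\, G_s(\mathbf{x}),
\end{align*}
where $A(\mathbf{x})=L(\mathbf{x},0,0,1)$ is linear in $\mathbf{x}$, $D(\mathbf{x})=Q(\mathbf{x},0,0,1)$ is quadratic in $\mathbf{x}$, and $B_s,E_s,G_s$ are linear, quadratic, and quadratic in $\mathbf{x}$ with coefficients polynomial in $(s_0,s_1)$.

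Degeneracy of the fiber at $P$ forces $V(A,D)\subset\P^2_x$ to be positive-dimensional; in the generic case $A\not\equiv 0$ and the conic $V(D)$ contains the line $V(A)$, so $D=A\cdot A'$ for a linear form $A'$, and the degenerate fiber is the line $V(A)$. Using $\tilde L=0$ to solve $A(\mathbf{x})=-tB_s(\mathbf{x})$ and substituting into $\tilde Q=0$ gives
\begin{equation*}
t\bigl[E_s(\mathbf{x}) - A'(\mathbf{x})\, B_s(\mathbf{x}) + t\, G_s(\mathbf{x})\bigr] = 0.
\end{equation*}
The factor $t=0$ corresponds to the original degenerate fiber, which is replaced by the exceptional divisor in the blow-up, so on the proper transform we retain only the bracketed factor. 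Setting $t=0$ gives the two equations $A(\mathbf{x})=0$ and $E_s(\mathbf{x}) - A'(\mathbf{x})B_s(\mathbf{x})=0$ on $\P^2_x$. The first cuts out the degenerate fiber $V(A)\cong\P^1$, and the second restricts to it as a degree $2$ polynomial on $\P^1$, which has exactly two roots by Bezout.

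The main obstacles are verifying that the two roots are distinct for a generic direction $(s_0:s_1)$ (so that the intersection is reduced and genuinely yields two points rather than one doubled) and handling the exceptional configurations where the degenerate fiber is not a line, namely when $A\equiv 0$ (in which case the fiber is the conic $V(D)$) or $D\equiv 0$ (in which case $Q$ vanishes identically on the slice and the fiber is the line $V(A)$). In each such case an analogous elimination yields either a line intersecting a conic or a line intersecting a quadric in $\P^2_x$, which again contributes two intersection points by Bezout; the smoothness hypothesis on $S$ should rule out the further-degenerate possibility that both $A$ and $D$ vanish identically.
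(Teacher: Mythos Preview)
Your argument is correct and reaches the same conclusion, but by a different route than the paper. The paper substitutes the line parametrization into Call--Silverman's auxiliary quadratics
\[
G_i x_j^2 + H_{ij} x_i x_j + G_j x_i^2 = L_k^2 Q + L\cdot(\text{some polynomial}),
\]
then invokes \cite[Proposition~1.4]{CallSilverman} to conclude that all the coefficients $G_i, H_{ij}$ vanish at the degenerate point and hence are divisible by a power of the line parameter $Y_1$; dividing out leaves, for each direction $s$, a single quadratic in $(x_i,x_j)$ with two roots. You instead expand $\tilde L$ and $\tilde Q$ directly in the line parameter $t$, use the degeneracy hypothesis to factor $D = A\cdot A'$, eliminate $A$ via $\tilde L = 0$, and read off a line-meets-conic intersection in $\P^2_x$.

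Your approach is more self-contained (it requires no external citation for the vanishing step) and more explicitly geometric; you are also more careful than the paper in flagging the non-generic fiber shapes $A\equiv 0$ and $D\equiv 0$. The paper's route, however, has the practical advantage that it directly produces the modified coefficients $G_i', H_{ij}'$ in Call--Silverman's format, which are precisely what the subsequent corollary and the computational algorithm for $\sigma_\ast$ on degenerate fibers require; your quadratic $E_s - A'B_s$ would have to be repackaged to recover those. Neither proof fully settles when the two intersection points coincide, but both implicitly count with multiplicity, which suffices for the involution to be well-defined.
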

    \begin{proof}
        Let $(P_0,P_1,P_2)$ be the degenerate point for the second projection, which after possibly a projective transformation we may assume is $(0,0,1)$. We dehomogenize to $(0,0)$ on $(Y_0,Y_1)$. For $(s_0,s_1) \in \P^1$, we take
        \begin{equation}\label{eq4}
            s_0Y_0 = s_1Y_1.
        \end{equation}
        We replace $(y_0,y_1,y_2)$ by $(Y_0,Y_1,1)$ and solve (\ref{eq4}) for $Y_0$ to get
        \begin{equation*}
            (y_0,y_1,y_2) = (s_1Y_1,s_0Y_1,s0).
        \end{equation*}
        We make this replacement to have
        \begin{equation}\label{eq5}
            G_i(Y_1,s)x_j^2 + H_{ij}(Y_1,s)x_ix_j + G_j(Y_1,s)x_i^2 = L_k^2Q + L(\text{some poly}).
        \end{equation}
        It is important to note that we now have the $G_i$ and $H_{ij}$ as functions of $s, Y_1$. At a point on the surface, the right-hand side is $0$ and at the degenerate point ($Y_1=0$), these coefficients $G_i,H_{ij}$ are identically $0$ \cite[Proposition 1.4]{CallSilverman}; so they are divisible by $Y_1$ (to some power). Dividing by the highest possible power of $Y_1$ we get a new version of (\ref{eq5}):
        \begin{equation}\label{eq6}
            G_i'(Y_1,s)x_j^2 + H_{ij}'(Y_1,s)x_ix_j + G_j'(Y_1,s)x_i^2 = 0.
        \end{equation}
        Each $s \in \P^1$ corresponds to a line through the origin, and for each $s$ there are $2$ sets of $x$ values solving (\ref{eq6}).
    \end{proof}
    Note that we may perform the same substitution, $(y_0,y_1,y_2) = (s_1Y_1,s_0Y_1,s_0)$, for $L$ and define $L'$ to be the result after dividing out by the highest possible power of $Y_1$. The following corollary generalizes \cite[Corollary 1.5]{CallSilverman} which is the key result for a practical algorithm to compute the involutions.
    \begin{cor}\label{cor1}
        Let $P=(a,b)$ be a point on $X$ and let $\sigma_y(P) = (a',b)$.

        \begin{enumerate}
            \item If $S_b$ is a non-degenerate fiber then $a,a'$ are the unique points on $L_b$ satisfying
                \begin{equation*}
                    G_k(b)x_l^2 + H_{kl}(b)x_kx_l + G_l(b)x_k^2 = 0 \qquad (k,l) \in \{(0,1),(0,2),(1,2)\}.
                \end{equation*}
                For each such pair $(k,l)$ the coordinates of $P, \sigma_y(P)$ satisfy
                \begin{equation} \label{eq11}
                    [a_ka_k',a_ka_l'+ a_la_k', a_la_l'] = [G_k(b), -H_{kl}(b), G_l(b)].
                \end{equation}
            \item \label{cor1_b} If $S_b$ is a degenerate fiber, then $a,a'$ are the unique points on $L_b'$, satisfying
                \begin{equation*}
                    G_k'(b,s)x_l^2 + H_{kl}'(b,s)x_kx_l + G_l'(b,s)x_k^2 = 0 \qquad (k,l) \in \{(0,1),(0,2),(1,2)\}.
                \end{equation*}
                For each such pair $(k,l)$ the coordinates of $P, \sigma_y(P)$ satisfy
                \begin{equation}\label{eq12}
                    [a_ka_k',a_ka_l'+ a_la_k', a_la_l'] = [G_k'(b,s), -H_{kl}'(b,s), G_l'(b,s)].
                \end{equation}
        \end{enumerate}
    \end{cor}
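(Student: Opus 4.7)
The plan is to treat the two parts in parallel, since part (2) is obtained from part (1) by transferring the argument to the blow-up $\tilde{X}$ built in the discussion preceding the corollary.

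For part (1), this is exactly Corollary 1.5 of Call--Silverman \cite{CallSilverman}, and I would simply cite it. To spell out the mechanism that will be re-used in (2): fix a permutation $(i,j,k)$ of $\{0,1,2\}$ and, following the conventions of the excerpt, take the pair $(k,l)$ to denote the two ``non-$i$'' indices in the quadratic
\[
G_k(b) x_l^2 + H_{kl}(b) x_k x_l + G_l(b) x_k^2 = 0.
\]
On a non-degenerate fiber $S_b$, this is a homogeneous quadratic in $[x_k:x_l]$ whose two projective roots correspond to the two points $a, a'$ of $p_y^{-1}(b)$. Vieta's formulas applied to the ratio $x_l/x_k$ give
\[
\frac{a_l a_l'}{a_k a_k'} = \frac{G_l(b)}{G_k(b)}, \qquad \frac{a_k a_l' + a_l a_k'}{a_k a_k'} = -\frac{H_{kl}(b)}{G_k(b)},
\]
which rearranges into the projective identity (\ref{eq11}).

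For part (2), the key observation is that equation (\ref{eq6}), obtained by substituting $(y_0,y_1,y_2)=(s_1 Y_1, s_0 Y_1, s_0)$ in (\ref{eq5}) and dividing out by the highest possible power of $Y_1$, plays exactly the role of the quadratic above, with $G_k, H_{kl}, G_l$ replaced by $G_k', H_{kl}', G_l'$ and the base point $b$ now accompanied by the extra datum $s \in \P^1$ which parametrises a line through the degenerate point. By Proposition \ref{prop1}, for each such $(b,s)$ there are exactly two $x$-solutions, namely the two points $a, a'$ of $L_b'$ on the blow-up. The very same Vieta calculation applied to (\ref{eq6}) therefore yields (\ref{eq12}).

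The only point that requires care is verifying that (\ref{eq6}) remains a genuine (non-identically-zero) quadratic in $[x_k:x_l]$, so that Vieta actually applies. This is exactly why we divide by the highest possible power of $Y_1$ in passing from (\ref{eq5}) to (\ref{eq6}): the resulting coefficients $G_k', H_{kl}', G_l'$ no longer vanish identically on the line, and Proposition \ref{prop1} guarantees that for generic $s$ they cut out two distinct projective roots. At isolated $s$ where the two roots coalesce one obtains a ramification point of the extended double cover, and the identity (\ref{eq12}) persists with $a = a'$. Uniqueness of $a, a'$ on $L_b'$ then follows since a pair of points in $\P^1$ is determined up to rescaling by the coefficients of the monic quadratic they satisfy, which is precisely what (\ref{eq12}) encodes as a projective equality of triples.
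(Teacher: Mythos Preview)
Your proposal is correct and follows essentially the same route as the paper: part (1) is cited directly from \cite[Corollary 1.5]{CallSilverman}, and part (2) applies the identical Vieta argument to equation (\ref{eq6}) from the proof of Proposition \ref{prop1}, with $G_k', H_{kl}', G_l'$ in place of $G_k, H_{kl}, G_l$. Your additional remarks on non-triviality of the quadratic after dividing by the highest power of $Y_1$ and on the ramification locus are elaborations the paper omits, but the core argument is the same.
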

    \begin{proof}
        The first part is \cite[Corollary 1.5]{CallSilverman}.

        The second part is the same, but using equations (\ref{eq6}) from the proof of Proposition \ref{prop1}. We can examine the coefficients of these as polynomials in $x_i$ to show that the two roots $x_i,x_i'$ must satisfy
        \begin{equation*}
            \frac{x_i}{x_j}\frac{x_i'}{x_j'} = \frac{G_i'}{G_j'}
        \end{equation*}
        and
        \begin{equation*}
            \frac{x_i}{x_j}+\frac{x_i'}{x_j'} = \frac{x_ix_j' + x_i'x_j}{x_jx_j'} = \frac{-H_{ij}'}{G_j'}.
        \end{equation*}
    \end{proof}
    For non-degenerate fibers, equation (\ref{eq11}) of Corollary \ref{cor1} is used directly to compute the new point \cite{CallSilverman}, but we cannot do the same for degenerate fibers. The reason is that given a point on a degenerate fiber, we would need to know the unique $(s_0,s_1)$ associated to the point. What we can do is combine equation (\ref{eq12}) from Corollary \ref{cor1} with blown-up versions of $L$ and $Q$ to obtain a variety whose points are the two points in the ``fiber'' with the associated unique $(s_0,s_1)$ value. This procedure is detailed in the proceeding section.

    \subsection{Computing $\sigma_{\ast}$ on degenerate fibers}
        In practice, we do not need to move the degenerate point to $(0,0,1)$. Let $(y_0,y_1,y_2)=(P_0,P_1,P_2)$ be the $y$-coordinates of the degenerate point. Assuming that $P_2 \neq 0$, we dehomogenize to $(p_0,p_1)$ on $(Y_0,Y_1)$. For $(s_0,s_1) \in \P^1$, we take
        \begin{equation}\label{eq7}
            s_0(Y_0-p_0) = s_1(Y_1-p_1).
        \end{equation}
        We replace $(y_0,y_1,y_2)$ by $(Y_0,Y_1,1)$ and solve (\ref{eq7}) for $Y_0$ to get
        \begin{equation*}
            (y_0,y_1,y_2) = (s1(Y_1-p_1) + s_0p_0,s_0Y_1,s_0).
        \end{equation*}
        Dehomogenizing at a different coordinate gives a similar substitution.

        We make this replacement to have
        \begin{equation}\label{eq8}
            G_i(y_1,s)x_j^2 + H_{ij}(y_1,s)x_ix_j + G_j(y_1,s)x_i^2 = 0.
        \end{equation}
        At $Y_1=p_1$, these coefficients, $G_i,H_{ij}$, are identically $0$, so they are divisible by $(Y_1-p_1)$ (to some power). Dividing by the highest possible power of $(Y_1-p_1)$, we get a new version of (\ref{eq8}):
        \begin{equation*}
            G_i'(y_1,s)x_j^2 + H_{ij}'(y_1,s)x_ix_j + G_j'(y_1,s)x_i^2 = 0.
        \end{equation*}
        Again, we can solve for the two roots $x_i,x_i'$ in terms of $y_1,s$
        \begin{equation*}
            x_ix_i' = G_i' \qquad x_ix_j' + x_i'x_j = -H_{ij}'.
        \end{equation*}
        To compute $\sigma_{\ast}$ we know $(x_0,x_1,x_2)$ and $(y_1)$, so use the 6 equations
        \begin{align*}
            x_ix_i' &= G_i' \quad i \in \{0,1,2\}\\
            x_ix_j' + x_i'x_j &= -H_{ij}' \quad i,j \in \{0,1,2\}
        \end{align*}
        plus $L',Q'$ in the variables $(x_0',x_1',x_2')$ and $(s_0,s_1)$. Where We $L',Q'$ are obtained by performing the same substitution, $(y_0,y_1,y_2) = (s1(Y_1-p_1) + s_0p_0,s_0Y_1,s_0)$, on $L,Q$ and dividing by $(Y_1-p_1)$. These 8 equations results in 2 points, which are swapped by the involution. Note that if $(s_0,s_1) = (0,1)$, then we may again get identically $0$ equations for the points and must divide by appropriate powers of $s_0$ before solving for the two $x$ coordinate points.

        Away from the degenerate (blown-up) point, the blow-up map is an isomorphism, i.e., we have an isomorphism with $X = V(L,Q)$, so this is truly an \emph{extension} of the $\sigma_{\ast}$.

    \subsection{Ramification locus on degenerate fibers}
        We follow the construction of Call-Silverman \cite{CallSilverman}, but using the $G_k', H_{ij}'$ as defined in the previous section.
        Define
        \begin{equation*}
            g'_{\ast} = \frac{(H_{ij}')^2 - 4G_i'G_j'}{(L_k')^2}
        \end{equation*}
        which we can compute by the same substitution as above
        \begin{equation*}
            (y_0,y_1,y_2) = (s_1Y_1,s_0Y_1,s_0)
        \end{equation*}
        and cancelling out and powers of $Y_1$. This is the discriminant of the quadratic equations for the degenerate fiber from Corollary \ref{cor1}(\ref{cor1_b}) and from equation (\ref{eq5}) is independent of the choice of $(i,j,k)$. The result is a degree $6$ equation in $(s_0,s_1)$ in $\P^1$. So this is a hypersurface in $\P^1$ and has at most $6$ points (exactly $6$ when counted with multiplicity).
        \begin{prop}\label{prop2}
            Let $P=[a,b]$.
            \begin{enumerate}
                \item If $a$ is degenerate, then $g_x'(a,s) =0$ if and only if $\sigma_x(P) = P$
                \item If $b$ is degenerate, then $g_y'(b,s) =0$ if and only if $\sigma_y(P) = P$
            \end{enumerate}
        \end{prop}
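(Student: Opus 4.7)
The plan is to reduce the proposition directly to Corollary \ref{cor1}(\ref{cor1_b}), reading off $g'_\ast$ as the discriminant of the binary quadratic whose roots are the two points in a ``fiber'' of the blown-up involution, and then invoking the elementary fact that a binary quadratic has a double root if and only if its discriminant vanishes. This mirrors the Call-Silverman argument for the non-degenerate case \cite{CallSilverman}; only the ingredients change, not the logic.

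I would prove part (2) in detail; part (1) follows by swapping the roles of $x$ and $y$. Fix $P=[a,b]$ with $b$ a degenerate $y$-coordinate, let $s \in \P^1$ be the blow-up coordinate associated to $P$, and write $\sigma_y(P) = (a',b)$. Choose a permutation $(i,j,k)$ of $\{0,1,2\}$ with $L'_k(b,s) \neq 0$. By Corollary \ref{cor1}(\ref{cor1_b}), the projective points $[a_i:a_j]$ and $[a'_i:a'_j]$ are precisely the two roots in $\P^1$ of
\[
G'_i(b,s)\, T_j^2 + H'_{ij}(b,s)\, T_i T_j + G'_j(b,s)\, T_i^2 = 0.
\]
Hence $\sigma_y(P) = P$ iff these two roots coincide, iff the discriminant $(H'_{ij}(b,s))^2 - 4 G'_i(b,s) G'_j(b,s)$ vanishes. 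Dividing by $(L'_k(b,s))^2 \neq 0$ gives exactly $g'_y(b,s) = 0$, as required.

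Two cleanups finish the argument. First, $g'_y$ does not depend on the chosen permutation $(i,j,k)$ by the remark preceding the proposition (a consequence of equation (\ref{eq5})), so the equivalence is consistent across all choices. Second, one must verify that at every relevant $(b,s)$ some permutation with $L'_k(b,s) \neq 0$ exists: if all three $L'_k$ vanished simultaneously, the rescaled linear form $L'$ would vanish on the entire blown-up fiber, contradicting the construction of the blow-up from a smooth surface. I expect this second point to be the main delicate step; once it is settled, the rest is a clean transcription of the standard ``double root iff discriminant zero'' dichotomy. The essential obstacle is therefore the geometric bookkeeping around the blow-up, not any substantive algebraic difficulty.
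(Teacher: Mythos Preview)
Your proposal is correct and matches the paper's approach exactly: the paper's proof is the single sentence ``This follows directly from Corollary \ref{cor1},'' and your argument is precisely the natural unpacking of that citation via the discriminant--double-root equivalence. Your two cleanup remarks (permutation independence of $g'_\ast$ and the existence of an index $k$ with $L'_k \neq 0$) go slightly beyond what the paper makes explicit, but they are the right points to check and do not depart from the intended route.
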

        \begin{proof}
            This follows directly from Corollary \ref{cor1}.
        \end{proof}

\begin{code}
\open{Something is not working here when $s=0$. It is the ''extra'' factors of $s0$ that occur. Is this ''extra'' factor always $s0^4$?}
\begin{python}
PP.<x0,x1,x2,y0,y1,y2>=ProjectiveSpace_cartesian_product([2,2],QQ)
R=PP.coordinate_ring()
l=y0*x0 + y1*x1 + (y0 - y1)*x2
q=(y1*y0)*x0^2 + ((y0^2)*x1 + (y0^2 + (y1^2 - y2^2))*x2)*x0 + (y2*y0 + y1^2)*x1^2+ (y0^2 + (-y1^2 + y2^2))*x2*x1
X=WehlerK3Surface([l,q])
print X.is_smooth()
X.degenerate_fibers()
P=X([0,0,1,0,0,1])
self=X
BR=X.ambient_space().base_ring()
if P[1][0]!=0:
    S.<z0,z1,z2,s0,s1,w1>=PolynomialRing(BR,6)
    t=w1-BR(P[1][1]/P[1][0])
    t1=BR(P[1][1]/P[1][0])
    phi=R.hom([z0,z1,z2,s0,s0*w1,s1*t + s0*P[1][2]/P[1][0]],S)
    T=[phi(self.L),phi(self.Q),phi(self.Gpoly(0,0)),phi(self.Gpoly(0,1)),phi(self.Gpoly(0,2)), -phi(self.Hpoly(0,0,1)),-phi(self.Hpoly(0,0,2)),-phi(self.Hpoly(0,1,2))]
elif P[1][1]!=0:
    S.<z0,z1,z2,s0,s1,w1>=PolynomialRing(BR,6)
    t=w1-BR(P[1][0]/P[1][1])
    t1=BR(P[1][0]/P[1][1])
    phi=R.hom([z0,z1,z2,s0*w1,s0,s1*t + s0*P[1][2]/P[1][1]],S)
    T=[phi(self.L),phi(self.Q),phi(self.Gpoly(0,0)),phi(self.Gpoly(0,1)),phi(self.Gpoly(0,2)), -phi(self.Hpoly(0,0,1)),-phi(self.Hpoly(0,0,2)),-phi(self.Hpoly(0,1,2))]
elif P[1][2]!=0:
    S.<z0,z1,z2,s0,s1,w1>=PolynomialRing(BR,6)
    t=w1-BR(P[1][1]/P[1][2])
    t1=BR(P[1][1]/P[1][2])
    phi=R.hom([z0,z1,z2,s1*(t) + s0*P[1][0]/P[1][2],s0*w1,s0],S)
    T=[phi(self.L),phi(self.Q),phi(self.Gpoly(0,0)),phi(self.Gpoly(0,1)),phi(self.Gpoly(0,2)), -phi(self.Hpoly(0,0,1)),-phi(self.Hpoly(0,0,2)),-phi(self.Hpoly(0,1,2))]
maxexp=[]
for i in range(2,len(T)):
    e=0
    while (T[i]/t^e).subs({w1:t1})==0:
        e+=1
    maxexp.append(e)
e=min(maxexp)
print "E:",e
(phi(X.Ramification_gx())/w1^e).subs({w1:t1}).factor()
print ((phi(X.L)/w1).subs({w1:0})).subs({s0:0,s1:1}),((phi(X.Q)).subs({w1:0})/s0^2).subs({s0:0,s1:1})
print X.sigmaY(X([1,1,-1,0,0,1])) #s0=0 is not a fixed point!!!
\end{python}

Otherwise, seems correct
\begin{python}
PP.<x0,x1,x2,y0,y1,y2>=ProjectiveSpace_cartesian_product([2,2],QQ)
R=PP.coordinate_ring()
l=y0*x0 + y1*x1 + (y0 - y1)*x2
q=(y1*y0)*x0^2 + ((y0^2)*x1 + (y0^2 + (y1^2 - y2^2))*x2)*x0 + (y2*y0 + y1^2)*x1^2+ (y0^2 + (-y1^2 + y2^2))*x2*x1
l=y0*x0 + y1*x1 + (y0 - y1)*x2
q=(y1*y0 + y2^2)*x0^2 + ((y0^2 - y2*y1)*x1 + (y0^2 + (y1^2 - y2^2))*x2)*x0 + (y2*y0 + y1^2)*x1^2+ (y0^2 + (-y1^2 + y2^2))*x2*x1
X=WehlerK3Surface([l,q])
print X.is_smooth()
print X.degenerate_fibers()
P=X([1,-1,-2,-1,-1,1])
P=X([1,-1,2,-1,-1,1])
self=X
BR=X.ambient_space().base_ring()
if P[1][0]!=0:
    S.<z0,z1,z2,s0,s1,w1>=PolynomialRing(BR,6)
    t=w1-BR(P[1][1]/P[1][0])
    t1=BR(P[1][1]/P[1][0])
    phi=R.hom([z0,z1,z2,s0,s0*w1,s1*t + s0*P[1][2]/P[1][0]],S)
    T=[phi(self.L),phi(self.Q),phi(self.Gpoly(0,0)),phi(self.Gpoly(0,1)),phi(self.Gpoly(0,2)), -phi(self.Hpoly(0,0,1)),-phi(self.Hpoly(0,0,2)),-phi(self.Hpoly(0,1,2))]
elif P[1][1]!=0:
    S.<z0,z1,z2,s0,s1,w1>=PolynomialRing(BR,6)
    t=w1-BR(P[1][0]/P[1][1])
    t1=BR(P[1][0]/P[1][1])
    phi=R.hom([z0,z1,z2,s0*w1,s0,s1*t + s0*P[1][2]/P[1][1]],S)
    T=[phi(self.L),phi(self.Q),phi(self.Gpoly(0,0)),phi(self.Gpoly(0,1)),phi(self.Gpoly(0,2)), -phi(self.Hpoly(0,0,1)),-phi(self.Hpoly(0,0,2)),-phi(self.Hpoly(0,1,2))]
elif P[1][2]!=0:
    S.<z0,z1,z2,s0,s1,w1>=PolynomialRing(BR,6)
    t=w1-BR(P[1][1]/P[1][2])
    t1=BR(P[1][1]/P[1][2])
    phi=R.hom([z0,z1,z2,s1*(t) + s0*P[1][0]/P[1][2],s0*w1,s0],S)
    T=[phi(self.L),phi(self.Q),phi(self.Gpoly(0,0)),phi(self.Gpoly(0,1)),phi(self.Gpoly(0,2)), -phi(self.Hpoly(0,0,1)),-phi(self.Hpoly(0,0,2)),-phi(self.Hpoly(0,1,2))]
maxexp=[]
for i in range(2,len(T)):
    e=0
    while (T[i]/t^e).subs({w1:t1})==0:
        e+=1
    maxexp.append(e)
e=min(maxexp)
print "E:",e
print "ram:",(phi(X.Ramification_gx())/(w1-t1)^(2*e)).subs({w1:t1}).factor()
print "L,Q:",((phi(X.L)).subs({w1:t1})).subs({s0:-4,s1:3}),":",((phi(X.Q)).subs({w1:t1})).subs({s0:-4,s1:3})
print X.sigmaY(P)
Q=X.sigmaY(P)
X.sigmaY(Q)==Q, P==Q
\end{python}
\end{code}

    \begin{thm}
        Let $S$ be a (possibly degenerate) Wehler K3 surface. We have
        \begin{equation*}
            \lim_{p \to \infty} R_p(x) = R(x) = 1 + e^{-x}(1+x).
        \end{equation*}
        Moreover, almost all cycles are symmetric.
    \end{thm}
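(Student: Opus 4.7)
The plan is to mirror the proof of Theorem \ref{thm2} by verifying the two hypotheses of Theorem \ref{thm_RV} for the extended involutions $\sigma_x$, $\sigma_y$ on a possibly degenerate Wehler K3 surface. The key structural observation is that the fixed-point set of each extended involution splits into two pieces with very different sizes: a ``generic'' piece supported on non-degenerate fibers, and a ``degenerate'' piece supported on the finitely many degenerate fibers.

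For the generic piece, the fixed points still lie on the smooth degree $6$, genus $10$ ramification curve $g_{\ast}$ in $\P^2$ used in the proof of Theorem \ref{thm2}, so the Hasse-Weil estimate still gives the two-sided bound $\abs{\#\Fix_{\text{gen}}(\sigma_{\ast}) - (p+1)} \leq 20\sqrt{p}$. For the degenerate piece, Proposition \ref{prop2} identifies the fixed points on each degenerate fiber with the zeros of the degree $6$ homogeneous polynomial $g'_{\ast}(s_0,s_1)$ on $\P^1$, of which there are at most $6$. Since the number of degenerate fibers of a fixed surface $S$ depends only on the coefficients of $L$ and $Q$ and not on $p$, this piece contributes at most a constant $C = C(S)$, independent of $p$.

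Combining these bounds with Lemma \ref{lem1}, whose proof uses only the Betti numbers of a K3 surface and is therefore insensitive to the presence of degenerate fibers, both hypotheses of Theorem \ref{thm_RV} can be verified exactly as in Theorem \ref{thm2}. The lower bound
\[
    \#\Fix(\sigma_x) + \#\Fix(\sigma_y) \geq 2((p+1) - 20\sqrt{p})
\]
still diverges as $p \to \infty$, and the ratio
\[
    \frac{\#\Fix(\sigma_x) + \#\Fix(\sigma_y)}{\#S(\F_p)} \leq \frac{2((p+1) + 20\sqrt{p}) + 2C}{p^2 - 22p + 1}
\]
still tends to $0$. Theorem \ref{thm_RV} then yields both the convergence $R_p(x) \to R(x)$ and the statement that almost all cycles are symmetric.

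The main obstacle I would anticipate is careful bookkeeping around the blow-up construction from Section 3: one must confirm that each extended $\sigma_{\ast}$ acts as a genuine involution on the full set of $\F_p$-points being counted (the blown-up surface, or equivalently the non-degenerate points together with the finitely many degenerate points parameterized by $(s_0,s_1) \in \P^1(\F_p)$), and that a fixed point on the blow-up corresponds correctly to a fixed point of the extended involution on the original surface. This is precisely what Proposition \ref{prop1}, Corollary \ref{cor1}(\ref{cor1_b}), and Proposition \ref{prop2} are built to provide, so once their framework is in place the combinatorial input to Theorem \ref{thm_RV} is unchanged up to an additive $O(1)$ error, which is dominated by the Hasse-Weil main term and leaves the asymptotics identical to the non-degenerate case treated in Theorem \ref{thm2}.
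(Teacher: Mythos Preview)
Your proposal is correct and follows essentially the same route as the paper: split $\Fix(\sigma_{\ast})$ into the genus $10$ ramification-curve contribution bounded by Hasse--Weil and an $O(1)$ contribution from the degenerate fibers via Proposition~\ref{prop2}, then feed these bounds and Lemma~\ref{lem1} into Theorem~\ref{thm_RV} exactly as in Theorem~\ref{thm2}. The only cosmetic difference is that the paper tracks the number $w_p^{\ast}$ of degenerate fibers over $\F_p$ and notes $w_p^{\ast}=w_0^{\ast}$ for almost all $p$, whereas you package this as a single constant $C(S)$; either formulation gives the needed $O(1)$ bound in the limit.
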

    \begin{proof}
        We apply the same proof as for Theorem \ref{thm2}, but have to take into account the fixed points of $\sigma_{\ast}$ that occur on degenerate fibers.

        On the non-degenerate fibers we apply the Hasse-Weil bounds for a genus $10$ curve and for each degenerate fiber there are at most $6$ fixed points
        \begin{equation*}
            \abs{\#\Fix(\sigma_{\ast})-(p+1)} \leq 20\sqrt{p} + 6w_p^{\ast},
        \end{equation*}
        where $w_p^{\ast}$ is the number of degenerate fibers in $S(\F_p)$. Let $w_0^{\ast}$ be the number of degenerate fibers in $S(\Q)$.
        In particular,
        \begin{equation} \label{eq_hasse2}
            (p+1)-20\sqrt{p} \leq \#\Fix(\sigma_{\ast}) \leq (p+1)+20\sqrt{p} + 6w_p.
        \end{equation}
        To apply Theorem \ref{thm_RV}, we need to show
        \begin{align}
            \lim_{p \to \infty} &\#\Fix(\sigma_x) + \#\Fix(\sigma_y) = \infty \label{eq9}\\
            \lim_{p \to \infty} &\frac{\#\Fix(\sigma_x) + \#\Fix(\sigma_y)}{\#S(\F_p)}=0. \label{eq10}
        \end{align}
        We consider the limit of the lower Hasse-Weil bound of (\ref{eq_hasse2}),
        \begin{equation*}
            \lim_{p \to \infty} (p+1)-20\sqrt{p} = \infty.
        \end{equation*}
        Therefore
        \begin{equation*}
            \lim_{p \to \infty} \#\Fix(\sigma_x) + \#\Fix(\sigma_y) = \infty
        \end{equation*}
        satisfying the first property (\ref{eq9}).

        To show that (\ref{eq10}) holds, we consider the upper Hasse-Weil bound of (\ref{eq_hasse2}) and a lower bound on $\#S(\F_p)$ from Lemma \ref{lem1},
        \begin{equation*}
            \lim_{p \to \infty} \frac{\#\Fix(\sigma_x) + \#\Fix(\sigma_y)}{\#S(\F_p)} < \lim_{p \to \infty} \frac{2((p+1)+20\sqrt{p}) + 6(w_p^x + w_p^y)}{p^{2}-22p+1} = 0
        \end{equation*}
        since for almost all primes $w_p^{\ast} = w_0^{\ast}$. Since the fraction is always nonnegative, we have our result by applying Theorem \ref{thm_RV}.
    \end{proof}

\begin{exmp}
    Consider the Wehler K3 surface defined by
    \begin{align*}
        L&: x_0y_0 + x_1y_1 + x_2y_2\\
        Q&: x_1^2y_0^2 + 2x_2^2y_0y_1 + x_0^2y_1^2 - x_0x_1y_2^2.
    \end{align*}
    This has two degenerate fibers:
    \begin{equation*}
        p^{-1}_x (-1,-1,1) \qquad \text{and} \qquad p^{-1}_x(1,1,1).
    \end{equation*}
    The surface has an asymmetric $8$-cycle which includes a points in a degenerate fiber starting at the point
    \begin{equation*}
        [(-1,-1,1),(1,0,1)].
    \end{equation*}
\begin{code}
\begin{python}
#asymmetric 8-cycle with degenerate fibers
PP.<x0,x1,x2,y0,y1,y2>=ProjectiveSpace_cartesian_product([2,2],QQ)
l=y0*x0 + y1*x1 + y2*x2
q=x1^2*y0^2+2*x2^2*y0*y1+x0^2*y1^2-x0*x1*y2^2
X=WehlerK3Surface([l,q])
P=X([1,1,-1,1,0,1])
T=X.orbit_phi(P,8)
print T
X.is_symmetric_orbit(T)
[(-1 : -1 : 1 : 1 : 0 : 1), (1 : 0 : 1 : -1 : 2 : 1), (1 : 1 : 1 : -1 :
0 : 1), (-1 : 0 : 1 : 1 : -2 : 1), (-1 : -1 : 1 : 1 : 0 : 1), (1 : 0 : 1
: -1 : 2 : 1), (1 : 1 : 1 : -1 : 0 : 1), (-1 : 0 : 1 : 1 : -2 : 1), (-1
: -1 : 1 : 1 : 0 : 1)]
\end{python}
\end{code}
\end{exmp}

    We generate similar experimental data as for Figure \ref{figure1} and Figure \ref{figure2} for degenerate surfaces. Figure \ref{figure3} shows the experimentally gathered cycle distributions (the dots) versus the limiting distribution $y=R(x) = 1+e^{-x}(1+x)$. Figure \ref{figure4} shows the error calculated from the difference in area under the curves. The $y$-axis is percent error $\abs{\frac{\text{actual value} - \text{experimental value}}{\text{actual value}}}$. The $x$-axis is $p$ for $\F_p$. The data is from 100 randomly generated degenerate Wehler K3 surfaces over $\F_p$ for
    \begin{equation*}
        p \in \{29, 37, 59, 61, 83, 113, 131, 149, 167, 181, 191, 223, 251, 269, 307, 353, 401, 457, 503\} \footnote{If the surface was degenerate for the listed prime, we used the next biggest good prime.}.
    \end{equation*}
    The computations were performed in Sage \cite{sage}.
    \begin{figure}
        \begin{center}
            \caption{Average Distribution}
            \label{figure3}
            \includegraphics[angle=0,scale=.45]{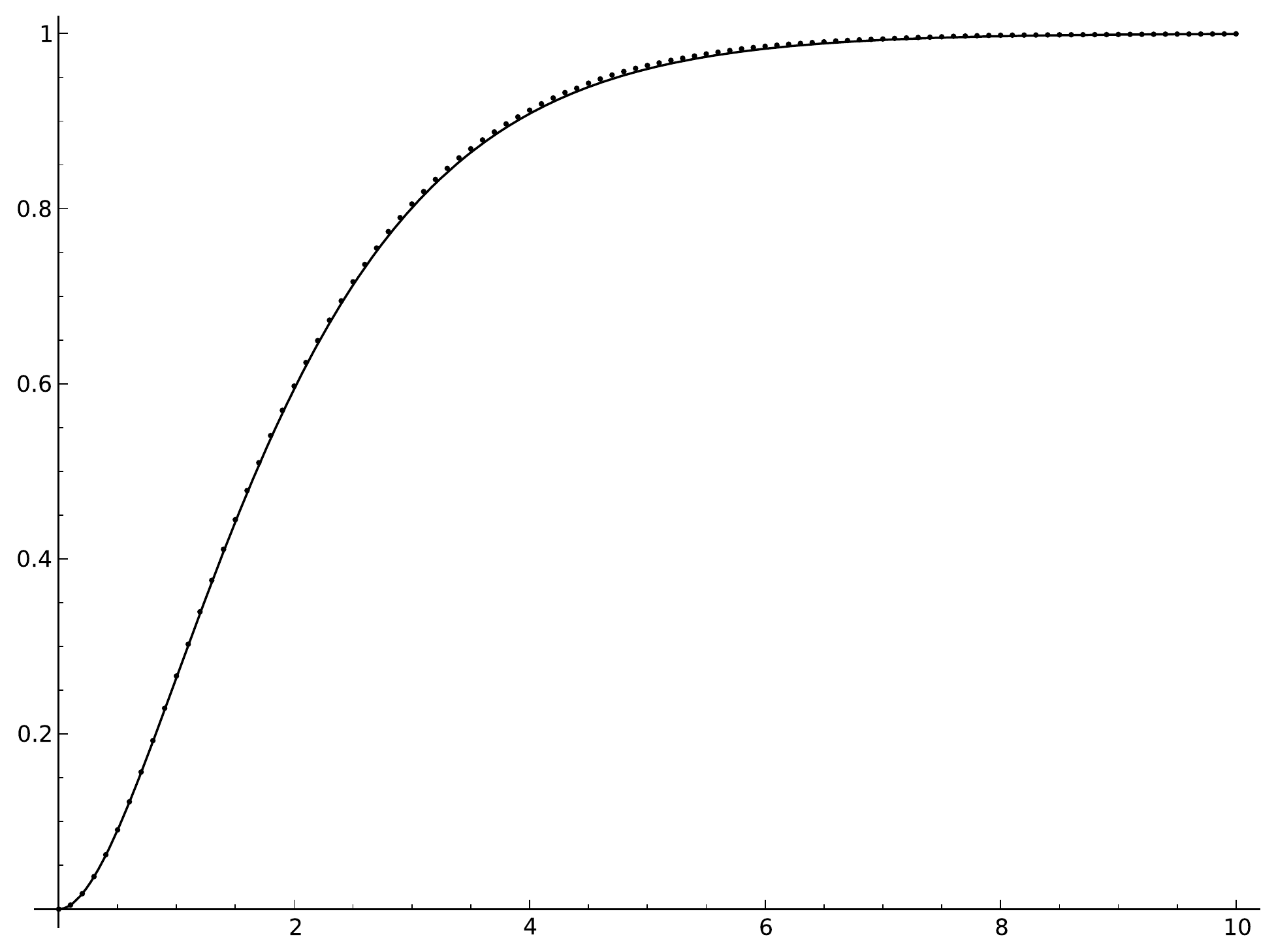}
        \end{center}
    \end{figure}

    \begin{figure}
        \begin{center}
            \caption{Error Data}
            \label{figure4}
            \includegraphics[angle=0,scale=.45]{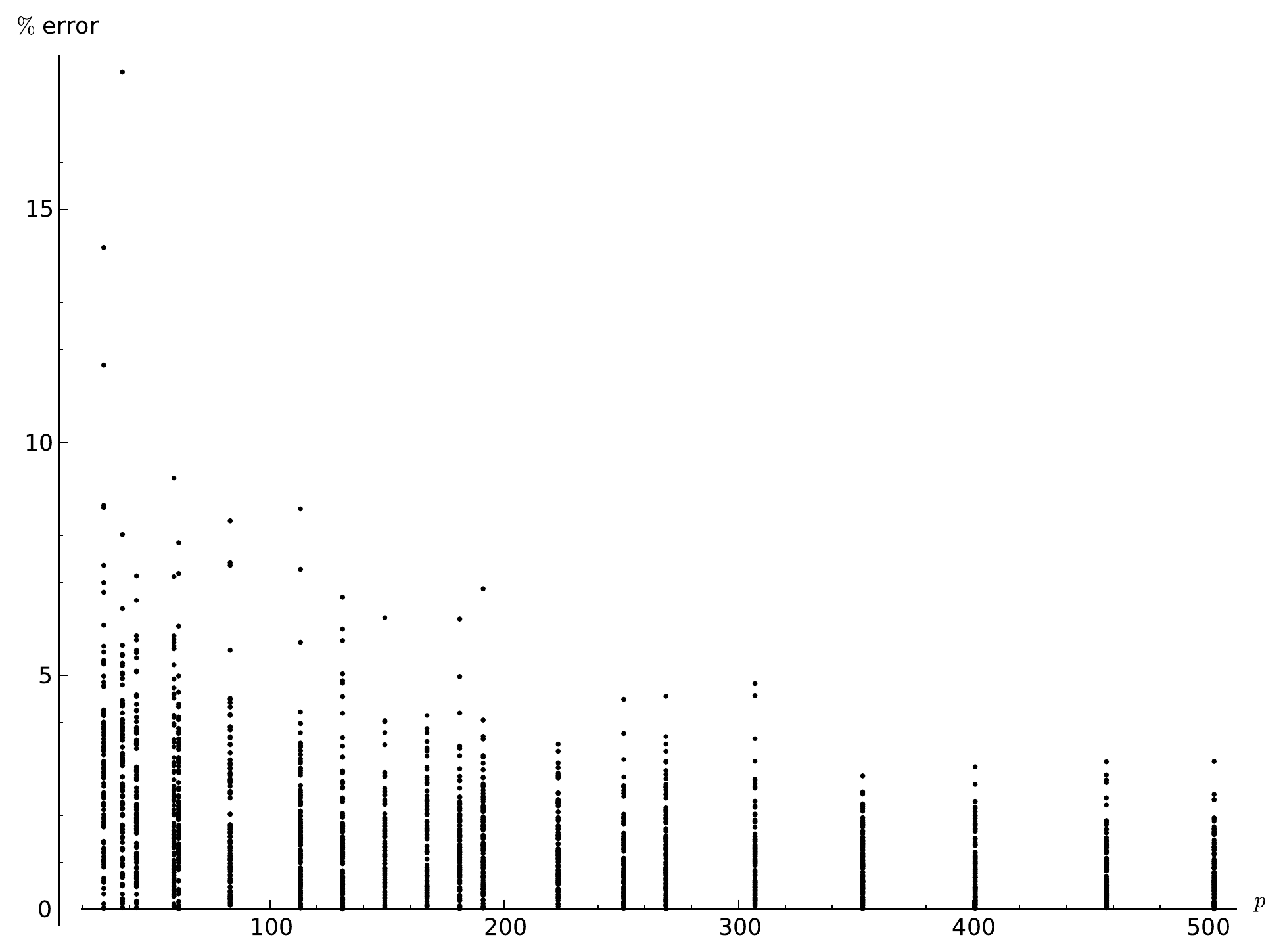}
        \end{center}
    \end{figure}

    \begin{rem}
        All algorithms used in this article are being written for inclusion in Sage \cite{sage} along with the algorithms in \cite{CallSilverman}.
    \end{rem}

\begin{code}
Code for average graphs
\begin{python}
load "/media/sf_WehlerK3/projective_space_cartesian_product.sage"

#average over all data
R.<x0,x1,x2,y0,y1,y2>=PolynomialRing(QQ,6)
M=10 #x from 0 to 10
N=100 #number of points

Surfaces=[]
count=0
for j in range(0,100):
    cycledata=[]
    #f = file('/media/sf_WehlerK3/Cycle_Lengths/Surface' + str(j) + '.txt','r')
    f = file('/media/sf_WehlerK3/Cycle_Lengths_degenerate/Surface' + str(j) + '.txt','r')
    for line in f:
        cycledata.append(sage_eval(line,locals=R.gens_dict()))
    f.close()

    [L,Q]=cycledata[0]
    perioddata=cycledata[1::]
    Points=[]
    for p,sym,asym in perioddata:
        T=[]
        S=sum(sym) #symmetric cycles
        x=0
        #z=p + log(p).n() #average value from my thesis data
        #exact value
        z=2*(sum(sym)+sum(asym))/(2*len(sym))
        z= S/len(sym) #only symmetric average length
        while x <= M:
            s=0
            for g in sym: #symmetric cycles
                if g <= z*x:
                    s+=g
            T.append([x,s/S.n()])
            x+=RR(ZZ(M)/ZZ(N))
        Points.append([p,T])
    Surfaces.append(Points)

#average
E=[[0,0] for i in range(N+1)]
for P in Surfaces:
    count+=len(P)
    for i in range(N+1):
        E[i][0]=P[0][1][i][0]
        for n in range(len(P)):
            E[i][1]+=P[n][1][i][1]
E=[[E[i][0],(E[i][1]/count).n()] for i in range(N+1)]
#Trapezoidal Rule
f(x) = 1-(1+x)/exp(x)
S=0
for i in range(1,len(E)):
    S+=1/2*(E[i][0]-E[i-1][0])*(E[i][1]+E[i-1][1])
S2=integrate(f,(x,0,M)).n()
S,S2, (S-S2).abs()

f(x) = 1-(1+x)/exp(x)
from sage.plot.point import Point
g1=plot(f(x),(x,0,10),color='black')
g= point(E,color='black',pointsize=6)
(g+g1).save(filename='average_data_degenerate.pdf')
(g+g1).show()
\end{python}

Code to generate error graphs
\begin{python}
load "/media/sf_WehlerK3/projective_space_cartesian_product.sage"

#Compute Error for each surface
R.<x0,x1,x2,y0,y1,y2>=PolynomialRing(QQ,6)
M=10 #x from 0 to 10
N=100 #number of points

ps=[29,31,37, 41, 43,47,59, 61, 67,83, 113, 131, 137, 149, 167, 181, 191, 193,223, 227,251, 269, 307, 353, 401, 409,457, 503]
errors=[[] for i in range(len(ps))]
for j in range(0,100):
    print"Surface ", j
    cycledata=[]
    #f = file('/media/sf_WehlerK3/Cycle_Lengths/Surface' + str(j) + '.txt','r')
    f = file('/media/sf_WehlerK3/Cycle_Lengths_degenerate/Surface' + str(j) + '.txt','r')
    for line in f:
        cycledata.append(sage_eval(line,locals=R.gens_dict()))
    f.close()

    [L,Q]=cycledata[0]
    perioddata=cycledata[1::]
    for p,sym,asym in perioddata:
        T=[]
        S=sum(sym) #symmetric cycles
        x=0
        #z=p + log(p).n() #average value from my thesis data
        #exact value
        z=2*(sum(sym)+sum(asym))/(2*len(sym))
        z=S/len(sym)  #only symmetric average length
        while x <= M:
            s=0
            for g in sym: #symmetric cycles
                if g <= z*x:
                    s+=g
            T.append([x,s/S.n()])
            x+=RR(ZZ(M)/ZZ(N))
        E=T
        #Trapezoidal Rule
        f(x) = 1-(1+x)/exp(x)
        S=0
        for i in range(1,len(E)):
            S+=1/2*(E[i][0]-E[i-1][0])*(E[i][1]+E[i-1][1])
        S2=integrate(f,(x,0,M)).n()
        #print p,(100*(S-S2)/S).abs()
        errors[ps.index(p)].append((100*(S-S2)/S).abs())
    print "-------------------------"

#arrange for graph
points=[]
for i in range(len(ps)):
    for t in errors[i]:
        points.append([ps[i],t])

#plot errors
g= point(points,color='black',pointsize=5,axes_labels=['$p$','$\%$ error'])
g.save(filename='error_data_degenerate.pdf')
g.show()
\end{python}

Code to generate cycle data
\begin{python}
load("~/Dropbox/SharedFolders/WehlerK3/projective_space_cartesian_product.sage")

@fork
def doit(p,Xp):
    g=Xp.Surgraph().all_simple_cycles()
    sym=[]
    asym=[]
    for c in g:
        if Xp.is_symmetric_orbit(c):
            sym.append(len(c)-1)
        else:
            asym.append(len(c)-1)
    return([p,sym,asym])

filename1="/Users/bhutz/Dropbox/SharedFolders/WehlerK3/Surfaces.txt"
PP.<x0,x1,x2,y0,y1,y2>=ProjectiveSpace_cartesian_product([2,2],QQ)
with open(filename1) as f:
    Surfaces=f.readlines()
for i in range(len(Surfaces)):
    Surfaces[i]=Surfaces[i].replace("\n","")
    Surfaces[i]=eval(Surfaces[i].replace("^","**"))

for i in range(82,91):
    print i
    filename="/Users/bhutz/Dropbox/SharedFolders/WehlerK3/Cycle_Lengths/Surface" + str(i) + ".txt"

    plist=[29,37,43,59,61,83,113,131,149,167,181,191,223,251,269,307,353,401,457,503]
    #plist=[29,37,43,59,61,83,113,131,149,167,181,191,223,251,269]
    #plist=[307,353,401,457,503]
    pdone=[]
    L=[]

    #with open(filename) as g:
    #    oldvalues=g.readlines()
    #for j in range(len(oldvalues)):
    #    oldvalues[j]=oldvalues[j].replace("\n","")
    #g.close()

    X=WehlerK3Surface(Surfaces[i])
    #badprimes=X.degenerate_primes()
    #print badprimes
    for n in plist:
        p=n
     #   while p in badprimes:
      #      p=next_prime(p)
        Xp=X.change_ring(GF(p))
        if p not in pdone and Xp.is_smooth():
            pdone.append(p)
            L.append(doit(p,Xp))

    f = file(filename,'w')
    f.write(str([X.L,X.Q]))
    f.write('\n')
    for l in L:
        f.write(str(l))
        f.write('\n')
    #for l in oldvalues:
    #    f.write(l)
    #    f.write('\n')
    f.close()
\end{python}
\end{code}

\bibliography{biblio}
\bibliographystyle{plain}

\end{document}